\theoremstyle{plain}
\newtheorem{thm}{Theorem}[section]
\newtheorem{lem}[thm]{Lemma}
\newtheorem{prop}[thm]{Proposition}
\newtheorem{cor}[thm]{Corollary}
\theoremstyle{definition}
\newtheorem{defi}[thm]{Definition}
\newtheorem{rem}[thm]{Remark}
\newtheorem{qst}[thm]{Question}
\newcommand{\NN}{\mathbb{N}}
\newcommand{\QQ}{\mathbb{Q}}
\newcommand{\RR}{\mathbb{R}}
\newcommand{\CC}{\mathbb{C}}
\newcommand{\calT}{\mathcal{T}}
\newcommand{\calD}{\mathcal{D}}
\newcommand{\Int}{\mathrm{Int\,}}
\title[Minimal genus relative trisections of corks]{Minimal genus relative trisections of corks}
\author[Natsuya Takahashi]{Natsuya Takahashi}
\date{May 9, 2023.}
\subjclass[2020]{Primary~57K40, Secondary~57R55, 57R65}
\keywords{4-manifolds; corks; trisections}
\address{Department of Pure and Applied Mathematics, Graduate School of Information Science and Technology, Osaka University, 1-5 Yamadaoka, Suita, Osaka 565-0871, Japan}
\email{nt-takahashi@ist.osaka-u.ac.jp}
\begin{document}
\begin{abstract}
In this paper, we prove that the trisection genus of the Akbulut cork is $3$ and construct infinitely many corks with trisection genus $3$. These results give the first examples of contractible $4$-manifolds whose trisection genera are determined except for the $4$-ball. We also give a lower bound for the trisection genus of a $4$-manifold with boundary. In addition, we construct low genus relative trisection diagrams of an exotic pair of simply-connected $4$-manifolds with $b_2 = 1$.
\end{abstract}
\maketitle

\section{Introduction}\label{sec:intro}

 A trisection is a decomposition of a $4$-manifold into three $4$-dimensional $1$-handlebodies. It was introduced by Gay and Kirby \cite{GayKir16} as a $4$-dimensional analogue of Heegaard splittings of $3$-manifolds.
 For compact $4$-manifolds with boundary, the notion of relative trisections was also introduced in \cite{GayKir16}.
 Later, it was studied in \cite{Cas16}, \cite{CasGayPin18_1}, \cite{CasGayPin18_2}, \cite{CasOzb19}, \cite{KimMil20}, and \cite{CasIslMilTom21} for example.
 A relative trisection diagram is a description of a relative trisection by three families of curves on a compact surface.
 In \cite{CasGayPin18_1}, Castro, Gay, and Pinz\'{o}n-Caiced established a natural correspondence between relative trisections and relative trisection diagrams. 
 By the correspondence, one can represent smooth structures of $4$-manifolds with boundary by relative trisection diagrams.

 A trisection genus is a fundamental invariant of smooth $4$-manifolds defined by trisections.
 For a $4$-manifold $X$, the trisection genus of $X$ is the minimal integer $g$ such that $X$ admits a (relative) trisection with the triple intersection surface of genus $g$.
 Meier and Zupan \cite{MeiZup17_1} classified closed, oriented, smooth $4$-manifolds with trisection genus at most $2$.
 However, in general, it is difficult to determine the trisection genus of a $4$-manifold, and the following question naturally arises.

\begin{qst}\label{qst}
 For a given smooth $4$-manifold, what is its trisection genus?
\end{qst}

 In this paper, we answer the above question for the Akbulut cork.

\begin{thm}\label{mainthm:Ac}
 The trisection genus of the Akbulut cork is $3$.
\end{thm}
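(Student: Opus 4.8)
The plan is to prove separately that the Akbulut cork $W$ admits a relative trisection whose central surface has genus $3$, and that it admits none of genus $0$, $1$, or $2$.

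\emph{Upper bound.} Starting from the standard handle decomposition of $W$ with one $0$-handle, one $1$-handle and one $2$-handle, I would build a relative trisection by the Gay--Kirby procedure, organizing the $1$-handle and the $2$-handle into two different sectors. Concretely this produces a relative trisection diagram $(\Sigma;\alpha,\beta,\gamma)$ with $\Sigma$ of genus $3$ and one boundary component -- for instance with parameters $(g,k;p,b)=(3,2;1,1)$, so with once-punctured-torus pages. One then applies the Castro--Gay--Pinz\'{o}n-Caicedo correspondence to certify that the diagram describes $W$: check that $(\alpha,\beta)$, $(\beta,\gamma)$ and $(\gamma,\alpha)$ are cut-system pairs, so that the diagram is a genuine relative trisection diagram; recover the associated Kirby diagram and reduce it to the cork's diagram by Kirby calculus; and verify that the induced open book on $\partial W$ is the expected one. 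The only delicate point here is the bookkeeping -- keeping the genus equal to $3$ and not larger, and treating the boundary open book correctly.

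\emph{Lower bound.} The key numerical input is the Euler-characteristic identity for a $(g,k;p,b)$-relative trisection of a $4$-manifold $X$, namely $\chi(X)=g+3p+2b-3k-1$, which follows from inclusion--exclusion applied to the three sectors $\natural^{k}(S^1\times B^3)$, the three intersection pieces (each a compression body with $\partial_+=\Sigma_{g,b}$ and $\partial_-=\Sigma_{p,b}$) and the central surface $\Sigma_{g,b}$. Since $W$ is contractible, $\chi(W)=1$, so any relative trisection of $W$ satisfies $g+3p+2b=3k+2$; together with $0\le p\le g$, $b\ge 1$ and the constraint $2(g-p)\ge k$ forced by $H_1(W)=0$, this leaves only finitely many parameter tuples for each $g\le 2$. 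If $g=0$ there are no $\alpha$-, $\beta$- or $\gamma$-curves, so $X\cong\natural^{m}(S^1\times B^3)\ne W$. If $g=1$, every surviving tuple has either $p=g$, again giving $\natural^{m}(S^1\times B^3)$, or equals $(1,1;0,2)$, whose annular page forces $\partial W$ to be $S^3$, $S^1\times S^2$ or a lens space; both conclusions contradict the fact that $\partial W$ is a nontrivial integral homology sphere. If $g=2$ the constraints admit only $(2,2;0,3)$ and $(2,4;0,6)$, both with planar pages; in the pair-of-pants case the monodromy lies in $\mathrm{MCG}(\Sigma_{0,3},\partial)\cong\ZZ^{3}$, so $\partial W$ would be Seifert fibered. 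One then excludes these by showing that $\partial W$ supports no such open book.

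I expect the genuine obstacle to be exactly this exclusion of $g=2$. There is no homological obstruction to the two candidate diagrams, so one must determine which $3$-manifolds arise as boundaries of these minimal planar-page relative trisections and certify that $\partial W$ is not among them: identifying $\partial W$ from its surgery presentation and using that it is not Seifert fibered disposes of $(2,2;0,3)$, while $(2,4;0,6)$ seems to require either a finite but intricate enumeration of the relevant genus-$2$ diagrams or a Heegaard Floer or fillability obstruction to planar open books on $\partial W$. Stated abstractly, the Euler-characteristic and $H_1$ analysis also gives the general lower bound for the trisection genus of a $4$-manifold with boundary mentioned in the abstract. The upper-bound construction, by contrast, should be routine once the correct genus-$3$ diagram is in hand.
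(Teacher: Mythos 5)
Your overall strategy coincides with the paper's: exhibit a genus-$3$ relative trisection for the upper bound, and rule out $g\le 2$ by combining the Euler-characteristic identity with the Etnyre--Ozbagci theorem that planar open books with at most three binding components only support Seifert fibered spaces. But there is a genuine gap on the upper-bound side. You never produce the genus-$3$ trisection; you only assert that the Gay--Kirby/Castro--Gay--Pinz\'{o}n-Caicedo algorithm applied to the one-$1$-handle, one-$2$-handle decomposition will output genus $3$ (with parameters $(3,2;1,1)$). The genus that algorithm outputs depends on placing the attaching circle of the $2$-handle on a page of an open book on $\#(S^1\times S^2)$, and for the Akbulut cork's complicated attaching curve there is no a priori reason the result has genus exactly $3$ rather than something larger; ``keeping the genus equal to $3$'' is not bookkeeping but the main constructive content of the theorem. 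The paper instead writes down an explicit $(3,3;0,4)$ diagram $\calD_1$, verifies by hand that the three pairs of curve families are standard, and identifies the resulting trisected manifold with $W_1$ via the Kim--Miller algorithm and Kirby calculus.

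On the lower bound your enumeration contains an error that, fortunately, works in your favor. The type $(2,4;0,6)$ is not a legal relative trisection type: the definition requires $2p+b-1\le k$, i.e.\ $5\le 4$, which fails (equivalently, the standard model has only $g-p=2$ curve pairs, so at most $2$ of them can be geometrically dual, whereas $(2,4;0,6)$ would need $g+p+b-1-k=3$ dual pairs). So the case you single out as ``the genuine obstacle,'' requiring Heegaard Floer input or an intricate enumeration of genus-$2$ diagrams, does not occur; the only admissible $g=2$ type is $(2,2;0,3)$ and the only $g=1$ type is $(1,1;0,2)$, both disposed of by Etnyre--Ozbagci once one knows $\partial W$ is not Seifert fibered. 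That last fact is the remaining input you leave unproved: the paper establishes it by showing that $\partial M_n$ (in particular $\partial W=\partial M_1$) is hyperbolic.
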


 As far as we know, this is the first example of a contractible $4$-manifold whose trisection genus is determined except for the $4$-ball.
 A cork is a pair of a contractible $4$-manifold and a smooth involution on the boundary.
 By using corks, one can construct exotic (i.e., homeomorphic but not diffeomorphic) smooth structures of $4$-manifolds. 
 It is well known that the Akbulut cork is the first example of a cork (\cite{Akb91_1}).
 Details about corks will be described in Subsection~\ref{subsec:Corks}.
 In addition, we show the following theorem.

\begin{thm}\label{mainthm:corks}
There are infinitely many corks with trisection genus $3$.
\end{thm}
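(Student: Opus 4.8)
The plan is to exhibit an explicit infinite family $\{C_n\}_{n\ge 1}$ of pairwise non-diffeomorphic corks such that each $C_n$ admits a relative trisection with central surface of genus $3$ and no $C_n$ is diffeomorphic to $B^4$. Granting this, the lower bound for the trisection genus of a compact $4$-manifold with boundary proved in this paper applies to each $C_n$ and gives $g(C_n)\ge 3$, while the genus-$3$ relative trisection diagram gives $g(C_n)\le 3$; hence $g(C_n)=3$ for all $n$, which is Theorem~\ref{mainthm:corks}.

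For the family I would take a one-parameter generalization of the Akbulut cork: a Kirby diagram consisting of a single dotted circle and a single $0$-framed knot linked symmetrically, into one band of which an $n$-fold twist region is inserted, normalized so that $n=1$ recovers the Akbulut cork. Contractibility of $C_n$ is immediate, since the unique $1$-handle and $2$-handle cancel algebraically; the involution $\tau_n$ of the homology sphere $\partial C_n$ is the symmetry exchanging the dotted circle with the $0$-framed knot; and the non-extendability of $\tau_n$ over $C_n$ — so that $(C_n,\tau_n)$ is a cork — follows, as for the Akbulut cork itself, from the standard gauge-theoretic argument detecting cork twists. Alternatively one may simply invoke a known infinite family of corks of this shape from the literature.

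The core of the argument is to realize every $C_n$ by a relative trisection diagram on a genus-$3$ surface. I would start from the genus-$3$ relative trisection diagram of the Akbulut cork underlying Theorem~\ref{mainthm:Ac} and follow the dictionary between Kirby diagrams and relative trisection diagrams to see how inserting the $n$-fold twist affects the three curve systems: the twist only reroutes some of the curves \emph{within the same genus-$3$ surface}, the affected curves acquiring $n$ extra intersections with a fixed handle, so the genus does not increase. One then checks, after handleslides, that each of the three pairwise sub-diagrams is a standard cut system, so by the correspondence of Castro, Gay, and Pinz\'{o}n-Caicedo the diagram describes a relative trisection, and a Kirby-calculus computation confirms that the underlying $4$-manifold is $C_n$.

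I expect this last step to be the main obstacle. The twist parameter must be strong enough to be visible — so that the $C_n$ really do form an infinite family of distinct corks, which one certifies by separating the boundaries $\partial C_n$ with an invariant that varies with $n$, such as the Casson invariant (which changes by a fixed nonzero amount under such a twist), the hyperbolic volume, or the order of torsion in $H_1$ of a fixed finite cover — yet weak enough to keep the central surface of genus $3$; carrying out the handleslide bookkeeping uniformly in $n$ and certifying that all three pairwise sub-diagrams stay standard is the delicate point. Combining the genus-$3$ diagrams with the lower bound then finishes the proof.
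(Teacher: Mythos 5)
Your overall architecture is the same as the paper's: an explicit one-parameter twist family generalizing the Akbulut cork, realized by genus-$3$ relative trisection diagrams, distinguished by the Casson invariant of the boundaries, with the cork structure obtained by the standard Stein/gauge-theoretic argument of Akbulut--Matveyev, and the lower bound of Theorem~\ref{mainthm:lowerbound} supplying $g\geq 3$. This is precisely the family $\{M_n\}$ of Section~\ref{sec:genus3}.

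There is, however, one genuine gap in how you invoke the lower bound. You write that since no $C_n$ is diffeomorphic to $B^4$, Theorem~\ref{mainthm:lowerbound} applies and gives $g(C_n)\geq 3$. That is not the hypothesis of the theorem: it requires that $\partial C_n$ is \emph{not a Seifert fiber space}, which is strictly stronger than $\partial C_n\not\cong S^3$. Contractible $4$-manifolds with Seifert fibered (non-spherical) boundary exist in abundance --- for instance, Mazur-type manifolds bounding Brieskorn homology spheres such as $\Sigma(2,3,13)$ --- and for those the argument gives nothing: Proposition~\ref{prop:rtd-Euler} only forces $p=0$ and $b\leq 3$ when $g\leq 2$, and Corollary~\ref{cor:SFS} then says the boundary is Seifert fibered, which is no contradiction. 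So you must actually prove that each $\partial C_n$ is not Seifert fibered. In the paper this is the content of Proposition~\ref{prop:M_n-hyp}, which establishes hyperbolicity of $\partial M_n$ by exhibiting it as a surgery on the hyperbolic $2$-bridge link $L_{[2,-2,2]}$ and ruling out exceptional surgeries (or, more softly, by Thurston's hyperbolic Dehn surgery theorem for all but finitely many $n$, as in Remark~\ref{rem:Thurston}). This is a substantive step, not bookkeeping; your passing mention of hyperbolic volume as one possible separating invariant does not discharge it. Once you add a proof that the boundaries are not Seifert fibered, the rest of your plan goes through exactly as in the paper.
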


 This theorem is proved by constructing an infinite family $\{M_n\}_{n\in\NN}$ of trisected corks.
 (In fact, $M_1$ is diffeomorphic to the Akbulut cork.)
 The genus $3$ relative trisection diagram of $M_n$ is shown in Figure~\ref{fig:D_n-rtd}.
 We also give nice properties of $\{M_n\}_{n\in\NN}$ (see Theorems~\ref{thm:M_n-Mazur}, \ref{thm:M_n-cork}, and Proposition~\ref{prop:M_n-hyp}).

 To prove that these corks cannot admit relative trisections of genus less than $3$, we give a lower bound for trisection genus.

\begin{thm}\label{mainthm:lowerbound}
 Let $X$ be a compact, connected, oriented, smooth $4$-manifold with connected boundary, and let $\chi(X)$ denote the Euler characteristic.
 The trisection genus of $X$ is greater than $\chi(X)-2$.
 Moreover, for an integer $b\in\{1,2,3\}$, if $\partial{X}$ cannot admit a planar open book decomposition with $b$ binding components, then the trisection genus of $X$ is greater than $\chi(X)+b-2$.
\end{thm}

 As an application, we obtain the following corollary.

\begin{cor}\label{maincor:bdryOB}
 The minimal number of binding components of planar open book decompositions on each $\partial{M_n}$ is $4$.
\end{cor}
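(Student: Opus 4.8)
The plan is to prove the two inequalities $\le 4$ and $\ge 4$ separately.

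For the bound $\le 4$, I would simply read off the open book that the genus $3$ relative trisection of $M_n$ in Figure~\ref{fig:D_n-rtd} induces on $\partial M_n$. That trisection has central surface of genus $3$ with $4$ boundary components, and since $M_n$ is a Mazur manifold by Theorem~\ref{thm:M_n-Mazur}, hence contractible with $\chi(M_n)=1$, all three sectors are $4$-balls; by the Euler characteristic formula for relative trisections the induced open book then has $4$ binding components and planar (genus $0$) pages. So $\partial M_n$ does admit a planar open book with $4$ binding components, and the minimal number in question is at most $4$.

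For the bound $\ge 4$, suppose for contradiction that $\partial M_n$ carries a planar open book $\mathcal{B}$ with $b\le 3$ binding components; its pages are the $b$-holed sphere. The mapping class group of the $b$-holed sphere rel boundary is trivial for $b=1$, infinite cyclic (generated by the boundary Dehn twist) for $b=2$, and free abelian of rank $3$ (generated by the three boundary Dehn twists) for $b=3$; in particular the monodromy of $\mathcal{B}$ is a product of boundary-parallel Dehn twists. I would use this to realize $\mathcal{B}$ as the open book induced by a relative trisection of genus $b-1$ with all three sectors $4$-balls of some compact connected oriented $4$-manifold $W$ with $\partial W=\partial M_n$; by the Euler characteristic formula such a $W$ has $\chi(W)=1$. (Equivalently and more concretely: an open book whose pages are the $b$-holed sphere, $b\le 3$, and whose monodromy is a product of boundary Dehn twists is supported by a Seifert fibration over $S^2$ with at most three exceptional fibers, so $\partial M_n$ would be a Seifert fiber space.)

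Now Proposition~\ref{prop:M_n-hyp} says $\partial M_n$ is hyperbolic, so it is not a Seifert fiber space; applying Theorem~\ref{mainthm:lowerbound} to $W$ gives that the trisection genus of $W$ exceeds $\chi(W)+1=2$, contradicting the genus $b-1\le 2$ relative trisection of $W$ constructed above. Hence $b\ge 4$, and together with the previous bound the minimal number of binding components is exactly $4$. The step I expect to be the main obstacle is the construction in the third paragraph, namely realizing an arbitrary planar open book with at most $3$ binding components by a relative trisection whose three sectors are $4$-balls (equivalently, pinning down the supporting manifold of such an open book); its resolution relies essentially on the elementary structure of the mapping class groups of the disk, the annulus, and the pair of pants.
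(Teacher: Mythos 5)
Your overall strategy coincides with the paper's: the upper bound is read off from the open book induced by the $(3,3;0,4)$-relative trisection $\calT_n$ via Lemma~\ref{lem:obd} (the paper does not need your Euler-characteristic detour here, since $p=0$ and $b=4$ are part of the trisection type), and the lower bound comes from the implication ``planar open book with at most $3$ binding components $\Rightarrow$ Seifert fiber space,'' which contradicts the hyperbolicity of $\partial M_n$ from Proposition~\ref{prop:M_n-hyp}. The paper obtains that implication by citing Etnyre--Ozbagci (Theorem~\ref{thm:EO08}); your parenthetical ``equivalently and more concretely'' remark --- the monodromy is a product of boundary-parallel Dehn twists because the mapping class groups of the disk, annulus, and pair of pants are generated by them, so the open book supports a Seifert fibration over $S^2$ with at most three exceptional fibers --- is essentially a sketch of the proof of that cited theorem, and it is the part of your argument that actually closes the lower bound.

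By contrast, the route you present as the main one, through an auxiliary trisected $4$-manifold $W$, does not work as stated. A relative trisection whose three sectors are $4$-balls has $k=0$, and the defining inequality $2p+b-1\le k$ then forces $b=1$; for $b=2,3$ there is no $(b-1,0;0,b)$-relative trisection at all. Moreover, even granting such a $W$, Proposition~\ref{prop:Euler} gives $\chi(W)=g-3k+3p+2b-1=(b-1)+2b-1=3b-2$, not $1$, so $\chi(W)+1=3b-1>b-1$ and Theorem~\ref{mainthm:lowerbound} produces no contradiction with the genus $b-1$ trisection. Fortunately this detour is redundant: once you know the open book forces $\partial M_n$ to be Seifert fibered, hyperbolicity finishes the argument directly, which is exactly what the paper does.
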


 Note that $\partial{M_1}$ is the boundary of the Akbulut cork.
 Theorem~\ref{mainthm:lowerbound} and Corollary~\ref{maincor:bdryOB} are proved by using the property that a relative trisection induces an open book decomposition on the boundary. 
 See section~\ref{sec:lowerbound} for details.

 We now turn our attention to trisections of exotic $4$-manifolds. 
 In \cite{LamMei20}, Lambert-Cole and Meier conjectured that trisection genus is additive under connected sum, that is, for any $4$-manifolds $X$ and $Y$, the trisection genus of $X\# Y$ is the sum of the trisection genera of $X$ and $Y$.
 They also showed that, if this is true, it follows that trisection genus is a homeomorphism invariant and there are no exotic $S^4$ or some $4$-manifolds (e.g., $\CC{P^2}$, $S^1\times S^3$, $S^2\times S^2$).
 So it is interesting to construct trisections for exotic $4$-manifolds.
 See \cite{BaySae17a}, \cite{MeiZup18}, \cite{CasOzb19}, and \cite{LamMei20}, for examples of such trisections.
 A related problem is to find trisection diagrams for exotic pairs (\cite[Problem~1.26]{AimPL}).
 In Section~\ref{sec:exotic}, we construct low genus relative trisection diagrams of an exotic pair of small $4$-manifolds (see Figures~\ref{fig:P-rtd} and \ref{fig:Q-rtd}). The diagrams give the following theorem.

\begin{thm}\label{mainthm:exotictris}
 There exist an exotic pair of simply-connected compact $4$-manifolds $P$ and $Q$ with $b_2 = 1$ such that they admit relative trisections of genus $4$ and $5$, respectively.
 In particular, the trisection genus of $Q$ is $4$, and the trisection genus of $P$ is either $4$ or $5$.
\end{thm}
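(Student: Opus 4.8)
The plan is to exhibit explicit relative trisection diagrams of genus $4$ and $5$ for a known exotic pair and then read off the trisection genera from Theorem~\ref{mainthm:lowerbound}. For the pair itself I would use a cork--twist construction. Starting from a cork $(C,\tau)$ --- for definiteness the Akbulut cork of Theorem~\ref{mainthm:Ac} or one of the $M_n$ of Theorem~\ref{mainthm:corks} --- choose a framed knot $K\subset\partial C$ and set $P=C\cup_{\tau(K)}h^2$ and $Q=C\cup_{K}h^2$, the results of attaching a single $2$-handle along $\tau(K)$ and along $K$ respectively. Since $C$ is contractible with homology sphere boundary, the involution $\tau$ extends to a self-homeomorphism $\widetilde\tau$ of $C$, and as $\tau$ is an involution $\widetilde\tau$ carries a neighborhood of $\tau(K)$ to a neighborhood of $K$; hence $P$ and $Q$ are homeomorphic, both simply connected with $b_2=1$, and $\partial P\cong\partial Q$ is the $\ZZ$-homology sphere obtained by surgery on $K\subset\partial C$. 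For a suitable choice of $K$ (the one fixed in Section~\ref{sec:exotic}) the pair is not diffeomorphic, so $P$ and $Q$ form an exotic pair.

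Next I would build the diagrams. Beginning with the genus $3$ relative trisection diagram of $C$ (Figure~\ref{fig:D_n-rtd}, or the diagram underlying Theorem~\ref{mainthm:Ac}), I would implement the $2$-handle attachment by the standard move on relative trisection diagrams: isotope the attaching knot into the page of the open book induced on $\partial C$, record its framing, stabilize the trisection in a controlled way, and add one new curve in one of the three systems. Carried out for $K$ this produces a genus $4$ diagram, which I take to represent $Q$; the cork-twisted curve $\tau(K)$ is more entangled on the surface and needs one further stabilization, producing a genus $5$ diagram representing $P$. These are the diagrams of Figures~\ref{fig:P-rtd} and \ref{fig:Q-rtd}.

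I would then verify correctness and compute the genera. First, each of the three pairs of curve systems in Figures~\ref{fig:P-rtd} and \ref{fig:Q-rtd} must be a cut system, so that by Castro--Gay--Pinz\'on-Caicedo \cite{CasGayPin18_1} the diagram determines a compact $4$-manifold; I would check this, confirm the induced open book on the boundary, and convert each diagram to a Kirby diagram which I simplify by handle slides and cancellations to recover $C$ with the prescribed $2$-handle, thereby identifying the $4$-manifolds as $P$ and $Q$. For the genera, one has $\chi(P)=\chi(Q)=2$ from $b_0=b_2=1$ and $b_1=b_3=0$, and $\partial P\cong\partial Q$ is a surgery on $K\subset\partial C$ which, for the chosen $K$, is not a Seifert fiber space (checked from the explicit surgery description, e.g.\ by producing a non-fibered incompressible torus or a hyperbolicity computation). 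Theorem~\ref{mainthm:lowerbound} then gives that the trisection genus of each of $P$ and $Q$ is greater than $\chi+1=3$, hence at least $4$. With the genus $4$ diagram this forces the trisection genus of $Q$ to be $4$, and with the genus $5$ diagram the trisection genus of $P$ lies in $\{4,5\}$.

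The main obstacle is the construction of the diagrams together with their verification: realizing the cork-twisted attaching circle $\tau(K)$ on the trisection surface while keeping the genus as low as $5$ (a naive realization blows the genus up), and then carrying out the Kirby-calculus identification carefully enough to be certain the diagrams represent this particular exotic pair rather than some other $4$-manifold. A secondary but essential point is proving that the common boundary is not a Seifert fiber space, since otherwise Theorem~\ref{mainthm:lowerbound} does not apply and the sharp value $4$ for $Q$ would not follow.
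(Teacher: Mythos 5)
Your proposal follows essentially the same route as the paper: take the exotic pair obtained from the Akbulut cork with a $2$-handle by a cork twist (the paper cites Akbulut--Yasui for homeomorphism and non-diffeomorphism), exhibit explicit relative trisection diagrams of genus $4$ for $Q$ and genus $5$ for $P$, verify them and identify the underlying $4$-manifolds via the Kim--Miller algorithm and Kirby calculus, and then apply Theorem~\ref{mainthm:lowerbound} after showing the common boundary is not Seifert fibered (the paper does this by realizing $\partial Q$ as a $\{1/2,0\}$-surgery on the Mazur link and invoking Yamada's classification of exceptional surgeries to get hyperbolicity). The obstacles you flag --- actually producing low-genus diagrams realizing $\tau(K)$ and certifying the boundary is not a Seifert fiber space --- are exactly the substantive content of the paper's proof.
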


 A natural question is whether $P$ can admit a genus $4$ relative trisection.
 However, we have not been able to find it.
 If one can show that there is no such relative trisection, it follows that the trisection genus for $4$-manifolds with boundary is not homeomorphism invariant.

\section{Preliminaries}\label{sec:prel}

\subsection{Notation and conventions}

 Throughout this paper, we assume that manifolds are compact, connected, oriented, and smooth unless otherwise stated. In addition, we will use the following notation.
\begin{itemize}
\item
 If two manifolds $X$ and $Y$ are orientation-preserving diffeomorphic to each other, then we write $X\cong Y$.
\item
 Let $\Sigma_{g,b}$ be a compact, connected, oriented surface of genus $g$ with $b$ boundary components.
\item
 For a manifold $X$ and a submanifold $A\subset X$, we denote a tubular neighborhood of $A$ in $X$ by $\nu(A;X)$.
\end{itemize}

\subsection{Relative trisections}

 We now introduce the definition of relative trisections rephrased by Castro, Gay, and Pinz\'{o}n-Caiced \cite{CasGayPin18_1}.
 A relative trisection is a decomposition of a $4$-manifold with connected boundary into three $4$-dimensional $1$-handlebodies.
 To describe how to glue these three pieces, we construct a model $Z_{k}$ of a genus $k$ $4$-dimensional $1$-handlebody.
 Let $g$, $k$, $p$, and $b$ be integers satisfying $g,k,p\geq0$, $b\geq1$, and $2p+b-1\leq k\leq g+p+b-1$.
 Note that the three integers $g-p$, $g-k+p+b-1$, and $k-2p-b+1$ are non-negative.
 Ultimately, we will define $Z_{k}$ as a boundary connected sum of two $4$-manifolds ${U}$ and ${V}$, so we start by defining these.

 First, we construct ${U}$ and give a decomposition of the boundary.
 Let $D$ be the third of the unit disk defined as follows.
\begin{equation*}
D:=\{ (r,\theta)\in \CC \mid r\in[0,1], \theta \in[-\pi/3,\pi/3]\}.
\end{equation*}
 Give a decomposition of the boundary as $\partial{D}=\partial^-D\cup \partial^0D\cup \partial^+D$, where
\begin{align*}
\partial^\pm D &:= \{ (r,\theta)\in D\mid r\in[0,1],\theta=\pm \pi/3\} \quad \textrm{and} \\
\partial^0D &:= \{ (r,\theta)\in D\mid r=1,\theta \in [-\pi/3,\pi/3] \}.
\end{align*}
 Let $P:=\Sigma_{p,b}$ and ${U} := D\times P$.
 We immediately see that ${U} \cong \natural^{2p+b-1}(S^1\times D^3)$.
 Decompose the boundary of ${U}$ as $\partial{{U}}=\partial^-{U} \cup \partial^0U_{p,b} \cup \partial^+{U}$, where
\begin{align*}
\partial^\pm {U} := \partial^\pm{D}\times P \quad \textrm{and} \quad \partial^0{{U}} := (\partial^0{D}\times P)\cup(D\times \partial{P}).
\end{align*}

 Next, we construct ${V}$ and give a decomposition of the boundary.
 Consider a $4$-dimensional solid torus $S^1\times D^3$, and decompose the boundary as follows.
\begin{equation*}
\partial(S^1\times D^3) = S^1\times(S^2_-\cup S^2_+) = \partial^-(S^1\times D^3)\cup \partial^+(S^1\times D^3),
\end{equation*}
 where $S_{\pm}^2$ are the northern and southern hemispheres of $\partial{D^3}$, and $\partial^\pm(S^1\times D^3):=S^1\times S_{\pm}^2$.
 This decomposition is the standard genus $1$ Heegaard splitting of $S^1\times S^2$.
 For $k-2p-b+1$ copies of $S^1\times D^3$ with the Heegaard splitting of the boundary, define $V_{k-2p-b+1}:= \natural^{k-2p-b+1}(S^1\times D^3)$, where the boundary connected sums are taken in neighborhoods of points in the Heegaard surfaces of copies of $\partial(S^1\times D^3)$ (see Figure~\ref{fig:connsum-S1xD3}).
 Let $\partial^\pm{V_{k-2p-b+1}}:=\natural^{k-2p-b+1}{\partial^\pm(S^1\times D^3)}$, then $\partial{V_{k-2p-b+1}}=\partial^-{V_{k-2p-b+1}}\cup \partial^+{V_{k-2p-b+1}}$.
 This is the standard genus $k-2p-b+1$ Heegaard splitting of $\#^{k-2p-b+1}(S^1\times S^2)$.
 We denote the result of stabilizing $g-k+p+b-1$ times by $\partial{{V}}=\partial^-{{V}}\cup \partial^+{{V}}$, which has genus $g-p$.
 Note that it is independent of the stabilizations (see Section~4 in \cite{Wal68}).
\begin{figure}[!htbp]
\centering
\includegraphics[scale=1.0]{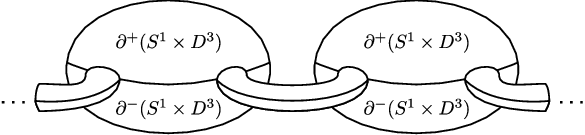}
\caption{An image of the boundary connected sums $V_{k-2p-b+1}= \natural^{k-2p-b+1}(S^1\times D^3)$.}
\label{fig:connsum-S1xD3}
\end{figure}

 Finally, we define $Z_{k} := {U}\natural {V}$, where the boundary connected sum is taken in neighborhoods of points in $\Int(\partial^-{U}\cap \partial^+{U})$ and $\partial^-{V}\cap \partial^+{V}$ (see Figure~\ref{fig:connsum-UV}).
 We see that $Z_{k}$ is diffeomorphic to $\natural^k(S^1\times D^3)$.
 Let $Y_{k} :=\partial{Z_{k}}$, and give a decomposition $Y_{k} = Y^-_{g,k;p,b}\cup Y^0_{g,k;p,b}\cup Y^+_{g,k;p,b}$, where
\begin{align*}
 Y^\pm_{g,k;p,b} := \partial^{\pm}{U}\natural \partial^{\pm}{{V}} \quad \textrm{and} \quad Y^0_{g,k;p,b} := \partial^0U.
\end{align*}
 See Sections~3 and 4 in \cite{CasGayPin18_1} for details of this model. Now we are ready to define a relative trisection.
\begin{figure}[!htbp]
\centering
\includegraphics[scale=1.0]{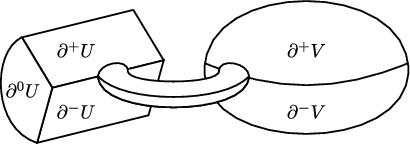}
\caption{An image of the boundary connected sum  $Z_{k}={U}\natural {V}$.}
\label{fig:connsum-UV}
\end{figure}

\begin{defi}[Castro--Gay--Pinz\'{o}n-Caiced {\cite[Definition~10]{CasGayPin18_1}}]\label{def:rt}
 Let $g$, $k$, $p$, and $b$ be integers satisfying $g,k,p\geq0$, $b\geq1$, and $2p+b-1\leq k\leq g+p+b-1$.
 Let $X$ be a compact, connected, oriented, smooth $4$-manifold with connected boundary.
 A decomposition $X=X_1\cup X_2\cup X_3$ is called a $(g,k;p,b)$-\textit{relative trisection} of $X$ if it satisfies the following conditions.
\begin{enumerate}
\item
 For each $i\in\{1,2,3\}$, there is a diffeomorphism $\phi_i:X_i\to Z_{k}$.
\item
 For each $i \in\{1,2,3\}$, taking indices mod $3$,
\begin{equation*}
\phi_i(X_i\cap X_{i\pm1})=Y^\mp_{g,k;p,b} \quad \textrm{and} \quad \phi_{i}(X_i\cap \partial{X})=Y^0_{g,k;p,b}.
\end{equation*}
\end{enumerate}
\end{defi}

 In this paper, we sometimes denote a relative trisection $X=X_1\cup X_2\cup X_3$ by $\calT$.
 The genus of the triple intersection surface $\Sigma := X_1\cap X_2\cap X_3$ is called the \textit{genus} of $\calT$.
 The \textit{trisection genus} of $X$ is the minimal integer $g$ such that $X$ admits a (relative) trisection of genus $g$.
 This is an invariant for smooth $4$-manifolds.

\begin{lem}[Castro--Gay--Pinz\'{o}n-Caiced {\cite[Lemma~11]{CasGayPin18_1}}]\label{lem:obd}
 A $(g,k;p,b)$-relative trisection of $X$ induces an open book decomposition on the boundary $\partial{X}$ with pages of genus $p$ with $b$ boundary components.
\end{lem}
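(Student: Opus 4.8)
The plan is to exhibit the open book directly in the local model $Z_k$ and then carry it to $\partial X$ through the charts $\phi_i$. Recall that $Y^0_{g,k;p,b}=\partial^0 U=(\partial^0 D\times P)\cup(D\times\partial P)$ with $P=\Sigma_{p,b}$, and that $D=\{re^{i\theta}\mid r\in[0,1],\ \theta\in[-\pi/3,\pi/3]\}$ is a circular sector of angle $2\pi/3$. On $\partial^0 U$ I would declare the \emph{binding} to be $\beta:=\{0\}\times\partial P$, where $0\in D$ is the apex $r=0$; this is a disjoint union of $b$ circles. For each $\theta\in[-\pi/3,\pi/3]$ set
\[
L_\theta:=\big(\{(1,\theta)\}\times P\big)\ \cup\ \big(\{(r,\theta):r\in[0,1]\}\times\partial P\big)\ \subset\ \partial^0 U,
\]
a copy of $P$ (more precisely, $P$ with an external collar attached along $\partial P$), with $\partial L_\theta=\beta$. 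The surfaces $L_\theta$ are pairwise disjoint away from $\beta$ and sweep out $\partial^0 U$; concretely, the map $\partial^0 U\setminus\beta\to[-\pi/3,\pi/3]$ sending $(1,\theta,q)\mapsto\theta$ and $(re^{i\theta},q)\mapsto\theta$ for $r>0$ is a trivial bundle with fiber $\Int P$, and near $\beta$ it is the standard binding fibration $(D^2\setminus\{0\})\times S^1\to S^1$ around each of the $b$ binding circles. This is the ``book of pages'' on $\partial^0 U$ that we will glue up.

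Next I would identify the gluing regions. By Definition~\ref{def:rt} we have $\phi_i(X_i\cap\partial X)=\partial^0 U$ and $\phi_i(X_i\cap X_{i\pm1})=Y^\mp_{g,k;p,b}=\partial^\mp U\,\natural\,\partial^\mp V$. Since $\partial^\mp V$ and the boundary-connected-sum region both lie away from $\partial^0 U$, intersecting with $Y^0=\partial^0 U$ gives $\phi_i\big((X_i\cap X_{i\pm1})\cap\partial X\big)=\partial^\mp U\cap\partial^0 U$. A direct computation in $D\times P$ shows $\partial^+ U\cap\partial^0 U=L_{\pi/3}$ and $\partial^- U\cap\partial^0 U=L_{-\pi/3}$, the two extreme pages, and that $\beta\subset L_{\pi/3}\cap L_{-\pi/3}$ is their common (intrinsic) boundary. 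Hence, under the identifications $\phi_i$, the common boundary piece of $X_i\cap\partial X$ with an adjacent $X_{i\pm1}\cap\partial X$ is precisely an extreme page $L_{\pm\pi/3}$ of the $i$-th copy, and the transition diffeomorphism $\phi_{i\pm1}\circ\phi_i^{-1}$ identifies this extreme page of the $i$-th copy with an extreme page of the adjacent copy and the binding with the binding. Consequently the two local ``book of pages'' structures agree on the overlap.

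Finally I would assemble the three pieces. Gluing three copies of $[-\pi/3,\pi/3]$ end-to-end cyclically produces $\RR/2\pi\ZZ\cong S^1$, so the three local fibrations $X_i\cap\partial X\setminus\beta\to[-\pi/3,\pi/3]$ patch—compatibly with the page and binding structure, by the previous step—to a fibration $\partial X\setminus B\to S^1$, where $B:=\bigcup_i\phi_i^{-1}(\beta)$ is a well-defined $b$-component link in $\partial X$ and each fiber is the interior of a copy of $\Sigma_{p,b}$; near $B$ it is still the standard binding fibration (three $2\pi/3$-sectors assembling into a disk). This is exactly an open book decomposition of $\partial X$ with binding $B$ and page $\Sigma_{p,b}$, i.e.\ pages of genus $p$ with $b$ boundary components, its monodromy being built from the transition maps $\phi_{i\pm1}\circ\phi_i^{-1}$ around $S^1$. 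The step I expect to require the most care is the second one: one must check that the charts $\phi_i$ are forced to send the extreme boundary pages of one piece to those of the adjacent piece in a page-preserving, binding-preserving way, so that the three local fibrations genuinely glue. This reduces to tracking the decomposition $\partial^0 U=(\partial^0 D\times P)\cup(D\times\partial P)$ against the decompositions $Y^\pm_{g,k;p,b}$ of the model, which is routine once the intersections $\partial^\pm U\cap\partial^0 U$ have been computed as above (see also \cite{CasGayPin18_1}).
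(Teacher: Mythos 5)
The paper does not prove this lemma --- it is quoted from \cite{CasGayPin18_1} --- but your argument is correct and is essentially the standard proof from that reference: exhibit $\partial^0 U=(\partial^0D\times P)\cup(D\times\partial P)$ as one third of an open book fibered by the angular coordinate of the sector $D$, with binding the apex circles $\{0\}\times\partial P$, check that $Y^\pm\cap Y^0$ are exactly the two extreme pages $L_{\pm\pi/3}$ (the connected-sum regions and $\partial^\pm V$ being disjoint from $\partial^0U$), and glue the three copies cyclically. The one point you flag as delicate is in fact automatic: the transition maps restrict to diffeomorphisms $L_{-\pi/3}\to L_{\pi/3}$ of compact surfaces whose boundaries are precisely the binding circles, so they preserve the binding for free.
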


 For a $(g,k;p,b)$-relative trisection $X=X_1\cup X_2\cup X_3$, each integer in the $4$-tuple $(g,k;p,b)$ has the following meaning.
 The integer $g$ is the genus of the triple intersection surface, $k$ is the $4$-dimensional genus of each sector $X_i$, $p$ is the genus of the page of the induced open book decomposition, and $b$ is the number of binding components.

\begin{prop}[Castro--Ozbagci {\cite[Corollary~2.10]{CasOzb19}}]\label{prop:Euler}
 Suppose that a $4$-manifold $X$ admits a $(g,k;p,b)$-relative trisection.
 Then the Euler characteristic $\chi(X)$ is equal to $g-3k+3p+2b-1$.
\end{prop}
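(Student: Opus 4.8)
The plan is to compute $\chi(X)$ from the decomposition $X=X_1\cup X_2\cup X_3$ by inclusion--exclusion. Each sector $X_i$ is a compact codimension-$0$ submanifold (with corners), each pairwise intersection $X_i\cap X_j$ is a compact $3$-manifold, and the triple intersection $\Sigma=X_1\cap X_2\cap X_3$ is a compact surface, so, after choosing a triangulation of $X$ in which all of these are subcomplexes (equivalently, after applying the Mayer--Vietoris sequence twice), one has
\begin{equation*}
 \chi(X)=\sum_{i=1}^{3}\chi(X_i)-\sum_{1\le i<j\le3}\chi(X_i\cap X_j)+\chi(\Sigma).
\end{equation*}
It therefore suffices to evaluate each of the three kinds of term using the model $Z_k$ and the gluing prescribed in Definition~\ref{def:rt}.

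First, by the first condition of Definition~\ref{def:rt}, each $X_i\cong Z_k\cong\natural^k(S^1\times D^3)$, which is homotopy equivalent to a wedge of $k$ circles, so $\chi(X_i)=1-k$. Next, by the second condition, each $X_i\cap X_j$ is carried to $Y^\pm_{g,k;p,b}=\partial^\pm U\natural\partial^\pm V$. Since $\partial^\pm U=\partial^\pm D\times P\cong[0,1]\times\Sigma_{p,b}$ deformation retracts onto $\Sigma_{p,b}$, we have $\chi(\partial^\pm U)=2-2p-b$; and $\partial^\pm V$ is, after the $g-k+p+b-1$ stabilizations, a handlebody of genus $g-p$, so $\chi(\partial^\pm V)=1-(g-p)$. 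Hence, as the boundary connected sum of $3$-manifolds is taken along a $D^2$,
\begin{equation*}
 \chi(X_i\cap X_j)=\chi(\partial^\pm U)+\chi(\partial^\pm V)-\chi(D^2)=(2-2p-b)+(1-g+p)-1=2-g-p-b.
\end{equation*}
Finally, unwinding the boundary connected sum $Z_k=U\natural V$, the central surface $\Sigma$ is identified with $(\partial^-U\cap\partial^+U)\,\#\,(\partial^-V\cap\partial^+V)$; since $\partial^-D\cap\partial^+D$ is the single point $\{r=0\}$ we get $\partial^-U\cap\partial^+U\cong P=\Sigma_{p,b}$, while $\partial^-V\cap\partial^+V$ is the genus $g-p$ Heegaard surface $\Sigma_{g-p,0}$, so $\Sigma\cong\Sigma_{p,b}\#\Sigma_{g-p,0}\cong\Sigma_{g,b}$ and $\chi(\Sigma)=2-2g-b$.

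Substituting these three values gives
\begin{equation*}
 \chi(X)=3(1-k)-3(2-g-p-b)+(2-2g-b)=g-3k+3p+2b-1,
\end{equation*}
as claimed. The computation is elementary, so the only points needing care are the justification of the inclusion--exclusion identity (which is standard for such nerve-type decompositions; alternatively one can read $\chi(X)$ off a handle decomposition of $X$ compatible with the relative trisection) and the bookkeeping that extracts $\chi$ of $Y^\pm_{g,k;p,b}$ and of the central surface from the nested boundary connected sums defining $Z_k$. I expect the identification of $\Sigma$ with $\Sigma_{g,b}$ to be the most error-prone step, although it is immediate from the definitions of $U$, $V$, and the boundary connected sums.
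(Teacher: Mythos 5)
Your computation is correct, and all three Euler characteristics are evaluated accurately: $\chi(X_i)=1-k$, $\chi(X_i\cap X_j)=2-g-p-b$ (the boundary connected sum of $[0,1]\times\Sigma_{p,b}$ with a genus $g-p$ handlebody), and $\chi(\Sigma)=2-2g-b$; the alternating sum indeed gives $g-3k+3p+2b-1$. Note that the paper does not prove this statement at all --- it is quoted from Castro--Ozbagci \cite[Corollary~2.10]{CasOzb19} --- so your argument supplies a proof where the paper only cites one; the inclusion--exclusion identity you invoke is legitimate here because the sectors, double intersections, and central surface form a decomposition by subcomplexes (or, as you note, one can apply Mayer--Vietoris twice, using that $(X_1\cup X_2)\cap X_3=(X_1\cap X_3)\cup(X_2\cap X_3)$ with intersection $\Sigma$). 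This is the standard route to the formula and matches the spirit of the cited source, so there is nothing to object to.
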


 Next, we introduce the definition of a relative trisection diagram.

\begin{defi}[Castro--Gay--Pinz\'{o}n-Caiced{\cite[Definition~1]{CasGayPin18_1}}]
 Let $\Sigma$ and $\Sigma'$ be compact, connected, oriented surfaces.
 For $i\in \{1,\ldots,n\}$, let $\alpha^i$ and $\beta^i$ be families of $k$ pairwise disjoint simple closed curves on $\Sigma$ and $\Sigma'$, respectively.
 Two $n+1$-tuples $(\Sigma;\alpha^1,\ldots,\alpha^n)$ and $(\Sigma';\beta^1,\ldots,\beta^n)$ are \textit{diffeomorphism and handle slide equivalent}  if they are related by diffeomorphisms of $\Sigma$ and handle slides within each $\alpha^i$ (i.e., we are only allowed to slide curves from $\alpha^i$ over other curves from $\alpha^i$, but not over curves from $\alpha^j$ when $j\neq i$).
\end{defi}

\begin{defi}[Castro--Gay--Pinz\'{o}n-Caiced {\cite[Definition~2]{CasGayPin18_1}}]\label{def:rtd}
 Let $g$, $k$, $p$, and $b$ be integers satisfying $g,k,p\geq0$, $b\geq1$, and $2p+b-1\leq k\leq g+p+b-1$.
 Let $\Sigma$ be a surface diffeomorphic to $\Sigma_{g,b}$, and let $\alpha$, $\beta$, and $\gamma$ be families of $g-p$ pairwise disjoint simple closed curves on $\Sigma$. 
 A $4$-tuple $(\Sigma;\alpha,\beta,\gamma)$ is called a $(g,k;p,b)$-\textit{relative trisection diagram} if $(\Sigma;\alpha,\beta)$, $(\Sigma;\beta,\gamma)$, and $(\Sigma;\gamma,\alpha)$ are diffeomorphism and handle slide equivalent to the standard diagram $(\Sigma_{g,b};\delta,\epsilon)$ shown in Figure \ref{fig:std-rtd}, where the red curves are $\delta$ and the blue curves are $\epsilon$.
\begin{figure}[!htbp]
\centering
\includegraphics[scale=1.0]{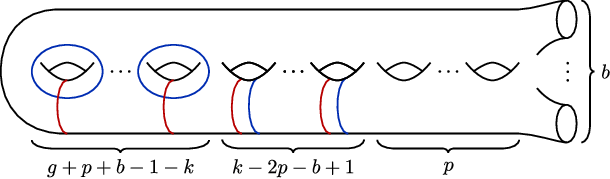}
\caption{The standard diagram $(\Sigma_{g,b};\delta,\epsilon)$ of type $(g,k;p,b)$.}
\label{fig:std-rtd}
\end{figure}
\end{defi}

 In this paper, we sometimes denote a relative trisection diagram by $\calD$.
 When considering a relative trisection diagram of the form $(\Sigma;\alpha,\beta,\gamma)$, we represent $\alpha$, $\beta$, and $\gamma$ curves by red, blue, and green curves, respectively (see Figure~\ref{fig:D_n-rtd} for example).
 A pair of black disks with a number indicates an attaching of a cylinder (see Figure~\ref{fig:blackhole}).
\begin{figure}[!htbp]
\centering
\includegraphics[scale=1.0]{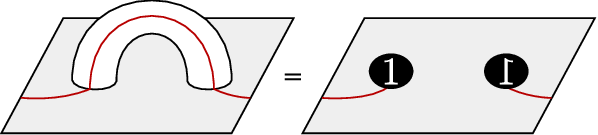}
\caption{A meaning of a pair of black disks with a number.}
\label{fig:blackhole}
\end{figure}

 The following theorem gives a natural correspondence between relative trisections and relative trisection diagrams.

\begin{thm}[Castro--Gay--Pinz\'{o}n-Caiced {\cite[Theorem~3]{CasGayPin18_1}}]\label{thm:rt-rtd}
 The following (i), (ii), and (iii) hold.
\begin{enumerate}
\item
 For any $(g,k;p,b)$-relative trisection diagram $(\Sigma;\alpha,\beta,\gamma)$, there exists a unique (up to diffeomorphism) trisected $4$-manifold $X=X_1\cup X_2\cup X_3$ satisfying the following conditions.
\begin{itemize}
\item
 $X_1\cap X_2\cap X_3\cong \Sigma$.
\item
 Under the above identification, each of $\alpha$, $\beta$, and $\gamma$ curves bound compressing disks of $X_1\cap X_2$, $X_2\cap X_3$, and $X_3\cap X_1$, respectively.
\end{itemize}
\item
 For any relative trisection $\calT$, there exists a relative trisection diagram $\calD$ such that $\calT$ is induced from $\calD$ by (i).
\item
 Let $\calD$ and $\calD'$ be relative trisection diagrams. If the relative trisections corresponding to these diagrams are diffeomorphic, then $\calD$ and $\calD'$ are diffeomorphism and handle slide equivalent.
\end{enumerate}
\end{thm}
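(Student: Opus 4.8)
The plan is to build the correspondence piece by piece and then package the three assertions. The data of a relative trisection breaks up as: the triple surface $\Sigma$; the three \emph{double} $3$-manifolds $X_i\cap X_{i\pm 1}$, which are the model compression bodies $Y^{\mp}_{g,k;p,b}$; and the three $4$-dimensional $1$-handlebodies $X_i\cong Z_k$, each recovered by filling in. A relative trisection diagram records, on $\Sigma$, a complete system of compressing curves for each of these double $3$-manifolds. So I would proceed in three steps: (1) a complete curve system together with the standard-diagram hypothesis of Definition~\ref{def:rtd} pins down each double $3$-manifold; (2) each double $3$-manifold together with an adjacent one pins down a sector $X_i$; (3) the sectors glue up to a unique $4$-manifold. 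The two facts that make (2) and (3) work, and which I would isolate first, are a relative analogue of the Laudenbach--Po\'{e}naru theorem --- a compact $4$-manifold built from a single $0$-handle and some $1$-handles is, together with the induced genus-$k$ and open-book structure on its boundary, diffeomorphic to $Z_k$ --- and a relative analogue of the Reidemeister--Singer/Waldhausen uniqueness statement for the relative Heegaard splittings that occur; both can be proved, following Gay--Kirby, by a Morse $2$-function/Cerf-theoretic argument, and I would either quote them from \cite{CasGayPin18_1} or reprove them in this setting.

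For (i): given $(\Sigma;\alpha,\beta,\gamma)$, let $H_\alpha$ (resp.\ $H_\beta$, $H_\gamma$) be the compression body obtained from $\Sigma\times I$ by attaching $2$-handles along $\alpha\times\{0\}$ (resp.\ $\beta\times\{0\}$, $\gamma\times\{0\}$). Because $(\Sigma;\alpha,\beta)$, $(\Sigma;\beta,\gamma)$, and $(\Sigma;\gamma,\alpha)$ are each diffeomorphism-and-handle-slide equivalent to the standard diagram of type $(g,k;p,b)$, each of $H_\alpha,H_\beta,H_\gamma$ is identified with a model compression body, and for each $i$ the two of $H_\alpha,H_\beta,H_\gamma$ meeting along $\Sigma$ inside $X_i$ fit together, along $\Sigma$ and a model $\partial^0$-face, into the model boundary $Y_k=\partial Z_k$ with its standard relative Heegaard splitting; by the relative Laudenbach--Po\'{e}naru statement this $3$-manifold bounds a well-defined copy of $Z_k$, which we take for $X_i$. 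Gluing $X_1,X_2,X_3$ along $H_\alpha,H_\beta,H_\gamma$ produces a $4$-manifold $X$; the three $\partial^0$-faces assemble into an open book (Lemma~\ref{lem:obd}), so $\partial X$ is as required, and $X=X_1\cup X_2\cup X_3$ is a $(g,k;p,b)$-relative trisection with triple surface $\Sigma$ in which $\alpha,\beta,\gamma$ bound the prescribed compressing disks. Uniqueness up to diffeomorphism uses the two isolated facts once more: the relative Laudenbach--Po\'{e}naru statement makes each $X_i$ canonical, and any two choices of gluing diffeomorphism differ by a self-diffeomorphism of $\partial Z_k$ preserving the structure, which therefore extends over $Z_k$ and leaves $X$ unchanged.

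For (ii): a relative trisection $\calT$ already comes equipped with identifications $\phi_i\colon X_i\to Z_k$ carrying the double $3$-manifolds to $Y^{\mp}_{g,k;p,b}$ and the boundary faces to $Y^0$, so a choice of complete compressing curve system in each of $X_1\cap X_2$, $X_2\cap X_3$, $X_3\cap X_1$ yields curves $\alpha,\beta,\gamma$ on $\Sigma=X_1\cap X_2\cap X_3$; since each double $3$-manifold is literally a model compression body, each pair $(\Sigma;\alpha,\beta)$, $(\Sigma;\beta,\gamma)$, $(\Sigma;\gamma,\alpha)$ is diffeomorphism-and-handle-slide equivalent to the standard diagram, and running the construction of (i) on $(\Sigma;\alpha,\beta,\gamma)$ returns $\calT$. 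For (iii): a diffeomorphism of relative trisections $X\to X'$ carries $\Sigma$ to $\Sigma'$ and each double $3$-manifold to its counterpart, hence carries the chosen curve system of the $\alpha$-compression body to \emph{some} complete compressing curve system of the $\alpha'$-compression body; because any two complete compressing curve systems of a fixed compression body are related by diffeomorphisms and handle slides among themselves, $\calD$ and $\calD'$ are diffeomorphism and handle slide equivalent.

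The main obstacle is not this assembly but the two relative uniqueness inputs, which carry the real weight: that $4$-dimensional $1$-handlebodies are rigid relative to their structured boundary, and that the relevant relative Heegaard splittings are unique up to the moves allowed in Definition~\ref{def:rtd}. Adapting the Gay--Kirby Cerf-theoretic arguments to the bounded setting --- in particular keeping the ``outer'' $\partial^0$ pieces, that is, the pages $\Sigma_{p,b}$ and the $b$ binding circles, under control throughout the generic homotopies --- is the delicate step; it is also the reason part (iii) holds only for diffeomorphisms of \emph{trisections} (sectors to sectors) and not merely of the underlying $4$-manifolds, for which stabilisation would be unavoidable.
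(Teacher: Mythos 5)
The paper does not prove this statement: it is imported verbatim as Theorem~3 of Castro--Gay--Pinz\'on-Caicedo \cite{CasGayPin18_1} and used as a black box, so there is no in-paper proof to compare yours against. Judged against the argument in the cited source, your outline follows the same strategy: build the three compression bodies $H_\alpha,H_\beta,H_\gamma$ from the curve systems, use the standard-diagram hypothesis to identify each pairwise union with the model $Y^-\cup Y^+\subset\partial Z_k$, fill each sector with $\natural^k(S^1\times D^3)$ via a (relative) Laudenbach--Po\'enaru statement, and get (ii) and (iii) from the existence and slide-uniqueness of cut systems for the resulting compression bodies. You also correctly locate where the weight sits --- the rigidity of the filling and the uniqueness of cut systems up to slides --- and correctly note that (iii) is a statement about diffeomorphisms of trisections, not of underlying $4$-manifolds. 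The one place your sketch is thinner than the actual proof is the gluing/uniqueness step: the ambiguity is not just a self-diffeomorphism of all of $\partial Z_k$ but of the pieces $Y^\pm$ rel the corner data ($\Sigma$, the page $\Sigma_{p,b}$, and the binding), and checking that such diffeomorphisms extend over $Z_k$ is exactly the ``relative'' content you defer to the Cerf-theoretic input; since you explicitly flag this as the quoted or to-be-reproved ingredient rather than claiming it is obvious, the proposal is acceptable as a proof outline relying on \cite{CasGayPin18_1}.
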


 For a $4$-manifold $X$ and a relative trisection diagram $\calD$, if $X$ is diffeomorphic to the trisected $4$-manifold corresponding to $\calD$ by Theorem~\ref{thm:rt-rtd}, then we simply say that $\calD$ is a relative trisection diagram of $X$.

 There is a transition between handlebody diagrams and relative trisection diagrams.
 Castro, Gay, and Pinz\'{o}n-Caiced \cite{CasGayPin18_2} showed how to obtain a relative trisection from a handle decomposition of a $4$-manifold.
 Moreover, they gave an algorithm to construct relative trisection diagrams from handlebody diagrams.
 Conversely, Kim and Miller \cite{KimMil20} described a method for constructing handlebody diagrams from relative trisection diagrams.

\subsection{Corks}\label{subsec:Corks}

 A pair of smooth manifolds are said to be \textit{exotic} if they are homeomorphic but not diffeomorphic. Corks are used to construct exotic smooth structures of $4$-manifolds.

\begin{defi}
 Let $C$ be a compact, contractible, smooth $4$-manifold with boundary and $\tau:\partial{C}\to \partial{C}$ be a smooth involution on the boundary.
 The pair $(C,\tau)$ is called a \textit{cork}, if $\tau$ extends to a self-homeomorphism of $C$, but cannot extend to any self-diffeomorphism of $C$.
\end{defi}

 Let $X$ be a smooth $4$-manifold, and let $(C,\tau)$ be a cork.
 Suppose that $C$ is embedded in $X$.
 Let $X'$ be the $4$-manifold obtained from $X$ by removing $C$ and re-gluing it by $\tau$ (i.e., $X' := (X-C)\cup_{\tau}C$).
 Then we say that $X'$ is obtained from $X$ by a \textit{cork twist} along $(C,\tau)$.
 The $4$-manifolds $X$ and $X'$ are homeomorphic, but they may not be diffeomorphic.
 Conversely, it is known that any two simply-connected, closed, exotic $4$-manifolds are related by a cork twist (\cite{Mat96}, \cite{CurFreHsiSto96}). 
 We now introduce an example of corks.

\begin{thm}[Akbulut \cite{Akb91_1}]
 Let $W_1$ be the smooth $4$-manifold given by the handlebody diagram in Figure~\ref{fig:Ac-Kd}.
 Let $f_1:\partial{W_1}\to\partial{W_1}$ be the involution obtained by first surgering $S^1\times D^3$ embedded along the core of the $1$-handle to $D^2\times S^2$ in the interior of $W_1$, and then surgering $D^2\times S^2$ embedded along the core of the $2$-handle to $S^1\times D^3$ (i.e., replacing the ``dot'' and ``0'' in Figure~\ref{fig:Ac-Kd}).
 Then the pair $(W_1,f_1)$ is a cork.
\end{thm}

 This is the first example of a cork and is called the \textit{Akbulut cork}.
 Obviously the Akbulut cork is Mazur-type.
 Here a contractible smooth $4$-manifold $X$ is called \textit{Mazur-type}, if $X$ admits a handle decomposition consisting of one $0$-handle, one $1$-handle, and one $2$-handle.
\begin{figure}[!htbp]
\centering
\includegraphics[scale=1.0]{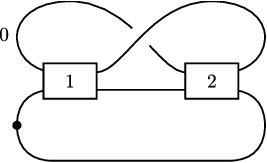}
\caption{A handlebody diagram of the Akbulut cork.}
\label{fig:Ac-Kd}
\end{figure}

\section{Genus $3$ relative trisections of corks}\label{sec:genus3}

 In this section, we construct an infinite family of corks with genus $3$ relative trisection. First, we give $(3,3;0,4)$-relative trisection diagrams.

\begin{lem}\label{lem:rtd-prf}
 For each positive integer $n$, let $\calD_n=(\Sigma;\alpha,\beta,\gamma)$ be the diagram shown in Figure~\ref{fig:D_n-rtd}, where $\Sigma$ is the genus $3$ surface with $4$ boundary components, and $\alpha$, $\beta$, and $\gamma$ are the families of three curves of red, blue, and green, respectively.
 Then $\calD_n$ is a $(3,3;0,4)$-relative trisection diagram.
\end{lem}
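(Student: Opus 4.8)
The plan is to check directly the conditions of Definition~\ref{def:rtd}. The surface $\Sigma$ in Figure~\ref{fig:D_n-rtd} is visibly diffeomorphic to $\Sigma_{3,4}$, each of $\alpha$, $\beta$, $\gamma$ consists of $g-p = 3-0 = 3$ curves, and the tuple $(g,k;p,b) = (3,3;0,4)$ satisfies $2p+b-1 = 3 \le k = 3 \le g+p+b-1 = 6$; so the bookkeeping constraints hold automatically. The substance of the lemma is therefore the assertion that each of the three pairs $(\Sigma;\alpha,\beta)$, $(\Sigma;\beta,\gamma)$, $(\Sigma;\gamma,\alpha)$ is diffeomorphism and handle slide equivalent to the standard diagram $(\Sigma_{3,4};\delta,\epsilon)$ of Figure~\ref{fig:std-rtd}. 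Here $k-2p-b+1 = 0$ and $g-k+p+b-1 = 3$, so that standard diagram has no ``parallel'' pairs: it is precisely $\Sigma_{3,4}$ carrying three dual pairs $(\delta_i,\epsilon_i)$, where $\delta_i$ meets $\epsilon_i$ transversely in a single point and is disjoint from $\epsilon_j$ for $j \ne i$, with the four boundary circles in standard position.

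For each of the three pairs I would proceed in two stages. First, I would verify that surgering $\Sigma$ along the three curves of one family yields a connected planar surface, i.e.\ $\Sigma_{0,4}$: surgery along interior curves does not affect the boundary, so it suffices to see that the three curves are linearly independent in $H_1(\Sigma;\ZZ/2)$ and that their union does not separate $\Sigma$, both of which can be read off Figure~\ref{fig:D_n-rtd}. Performing this for both families of the pair confirms that each family is a genuine cut system realizing the genus-$3$ sector as a compression body over a genus-$0$ page. Second, I would exhibit an explicit finite sequence of handle slides \emph{within a single family}, followed by an ambient diffeomorphism of $\Sigma$, carrying the pair to ``three dual pairs plus four boundary circles in standard position''. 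Since a family of disjoint simple closed curves on a compact surface is determined, up to a diffeomorphism of the surface, by its intersection pattern together with the topology of the complement (the change-of-coordinates principle), any such configuration on $\Sigma_{3,4}$ is diffeomorphic to $(\Sigma_{3,4};\delta,\epsilon)$; alternatively one can appeal to the classification of minimal-genus relative Heegaard diagrams of a fixed $3$-manifold with boundary up to handle slides and diffeomorphism. Either way the pair is equivalent to the standard $(3,3;0,4)$-diagram, as required.

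The computational heart — and the step I expect to be the genuine obstacle — is carrying out the second stage uniformly in $n$. The integer $n$ enters Figure~\ref{fig:D_n-rtd} as a prescribed amount of twisting (or clasping) along one of the bands of $\Sigma$, so the curves that traverse that region wind around it $n$ times, and the handle slides needed to straighten them must be iterated a number of times growing with $n$ (morally, ``$n$ handle slides undo $n$ twists''). One has to check that after these slides every curve of every family returns to a position independent of $n$, while no curve is ever slid over a curve of a different colour. Organising this cleanly — either by an explicit induction on $n$ whose base case is a small diagram simplified by hand and whose inductive step removes one twist at the cost of one handle slide in each affected family, or by packaging the whole twisting region into a single ``$n$-fold slide'' move — is where the care is needed; the remaining surface topology is routine.
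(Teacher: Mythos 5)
Your setup is correct and matches the paper's: the only substantive content of the lemma is that each of $(\Sigma;\alpha,\beta)$, $(\Sigma;\beta,\gamma)$, $(\Sigma;\gamma,\alpha)$ is diffeomorphism and handle slide equivalent to the standard $(3,3;0,4)$ diagram, which here is three dual pairs on $\Sigma_{3,4}$ with no parallel pairs. But your proposal stops exactly where the proof has to begin: you never produce the sequence of slides and diffeomorphisms, you only assert that you ``would exhibit'' one and correctly flag that doing so uniformly in $n$ is the genuine obstacle. The two shortcuts you offer in place of the computation do not close the gap. The change-of-coordinates principle only applies \emph{after} the two families have been brought into a position where their geometric intersection pattern is literally that of three dual pairs; getting them there is precisely the handle-slide computation you are deferring. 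And the appeal to ``the classification of minimal-genus relative Heegaard diagrams up to handle slides and diffeomorphism'' is not a theorem you can cite: in the relative setting there is no clean Waldhausen-type statement letting you replace the slide-equivalence condition of Definition~\ref{def:rtd} by a check of the diffeomorphism type of the associated $3$-dimensional piece, which is why Definition~\ref{def:rtd} is phrased in terms of slide equivalence in the first place and why the paper verifies it by hand. Your stage one (checking that surgery on each family yields $\Sigma_{0,4}$) is a necessary condition only and proves nothing about equivalence to the standard diagram.

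For comparison, the paper's proof does the work you omit by introducing four reusable local moves --- two realized by Dehn twists, one handle-slide move that lets a ``black disk'' (tube foot) pass through a curve of the other family, and one exchanging a meridian and longitude --- and then explicitly reducing each of the three pairs in figures: $(\Sigma;\alpha,\beta)$ is already standard, $(\Sigma;\gamma,\alpha)$ is standardized by diffeomorphisms alone, and $(\Sigma;\beta,\gamma)$ needs both diffeomorphisms and slides. Your guess about how $n$ enters is also off: the paper does not iterate $n$ handle slides to undo $n$ twists; instead the $n$-fold winding occurs around a tube foot and is killed outright by the operation that lets one ignore the number of rotations of a curve about a black disk, so the general-$n$ argument is verbatim the $n=1$ case. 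Without either that observation or an explicit induction on $n$, your plan does not yield a proof.
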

\begin{figure}[!htbp]
\centering
\includegraphics[scale=1.0]{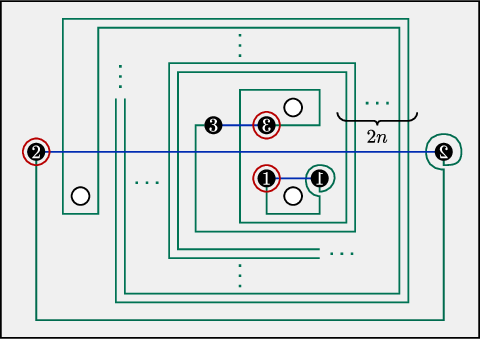}
\caption{$\calD_n=(\Sigma;\alpha,\beta,\gamma)$.}
\label{fig:D_n-rtd}
\end{figure}

\begin{proof}
 We prove that the triples $(\Sigma;\alpha,\beta)$, $(\Sigma;\beta,\gamma)$, and $(\Sigma;\gamma,\alpha)$ are diffeomorphism and handle slide equivalent to the standard diagram in Figure~\ref{fig:std3304}.
 To prove this, we introduce the four operations shown in Figures~\ref{fig:modif12} and \ref{fig:modif345}.
 The operation (iii) is obtained by two Dehn twists (see Figure~\ref{fig:modif3-prf}).
 Combining the operations (iii) and (i) shown in Figure~\ref{fig:modif4-prf}, we obtain the operation (iv), which exchanges a meridian and a longitude.
\begin{figure}[!htbp]
\centering
\includegraphics[scale=1.0]{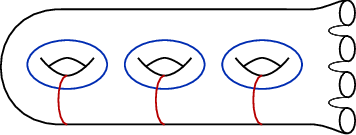}
\caption{The standard diagram of type $(3,3;0,4)$.}
\label{fig:std3304}
\end{figure}
\begin{figure}[!htbp]
\centering
\includegraphics[scale=1.0]{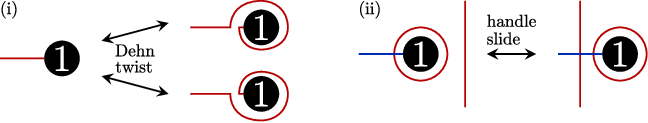}
\caption{The operations (i) and (ii).}
\label{fig:modif12}
\end{figure}
\begin{figure}[!htbp]
\centering
\includegraphics[scale=1.0]{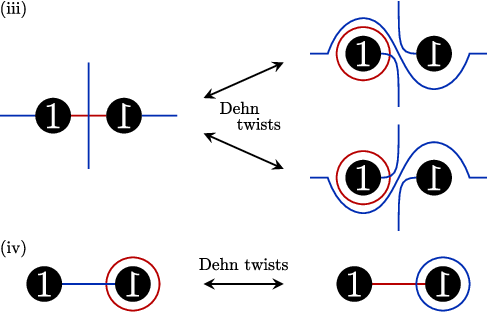}
\caption{The operations (iii) and (iv).}
\label{fig:modif345}
\end{figure}
\begin{figure}[!htbp]
\centering
\includegraphics[scale=1.0]{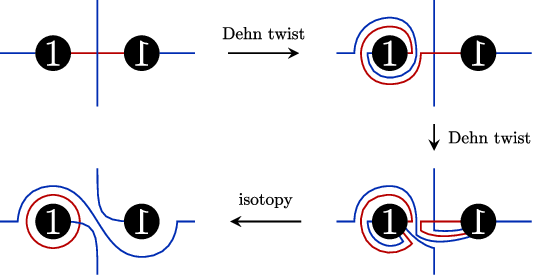}
\caption{A proof of the operation (iii).}
\label{fig:modif3-prf}
\end{figure}
\begin{figure}[!htbp]
\centering
\includegraphics[scale=1.0]{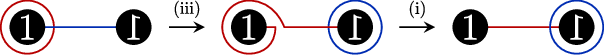}
\caption{A proof of the operation (iv).}
\label{fig:modif4-prf}
\end{figure}

 A proof for the case $n=1$ given by Figures~\ref{fig:sigma-ab}, \ref{fig:sigma-ca}, and \ref{fig:sigma-bc}.
 Note that the same operations can be performed for any $n\geq2$.
 The triple $(\Sigma;\alpha,\beta)$ in Figure~\ref{fig:sigma-ab} is already diffeomorphic to the standard diagram.
 $(\Sigma;\gamma,\alpha)$ can be made standard by only diffeomorphisms (see Figure~\ref{fig:sigma-ca}).
 The second and third diagrams are obtained by dragging the black disks along the marked $\gamma$ curves.
 Note that we can ignore the number of rotations of a $\gamma$ curve with respect to a black disk by the operation (i).
 Applying the operation (iv) to the last diagram, we obtain the standard diagram.
  $(\Sigma;\beta,\gamma)$ can be made standard by diffeomorphisms and handle slides shown in Figure~\ref{fig:sigma-bc}.
 The third diagram is obtained by dragging the black disk with blue circle along the marked $\gamma$ curve. In this process, when the black disk approaches a $\beta$ curve, it can pass through by the operation (ii).
\begin{figure}[!tbp]
\centering
\includegraphics[scale=1.0]{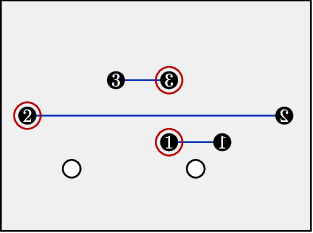}
\caption{$(\Sigma;\alpha,\beta)$.}
\label{fig:sigma-ab}
\end{figure}
\begin{figure}[!htbp]
\centering
\includegraphics[scale=1.0]{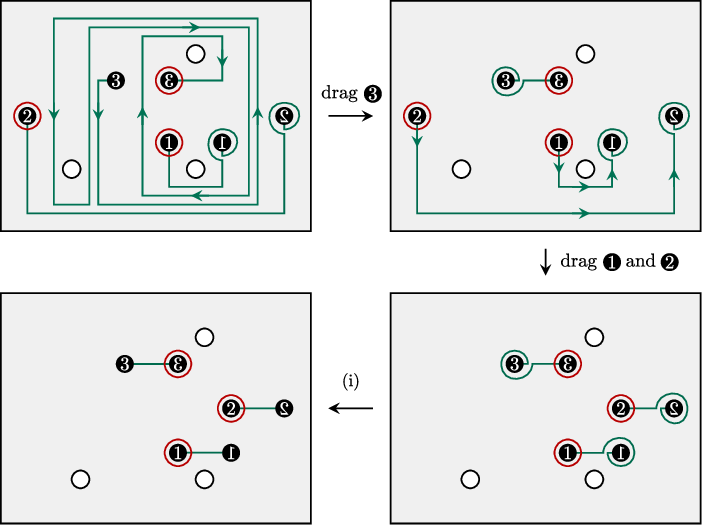}
\caption{Diffeomorphisms proving $(\Sigma;\gamma,\alpha)$ can be made standard.}
\label{fig:sigma-ca}
\end{figure}
\begin{figure}[!htbp]
\centering
\includegraphics[scale=1.0]{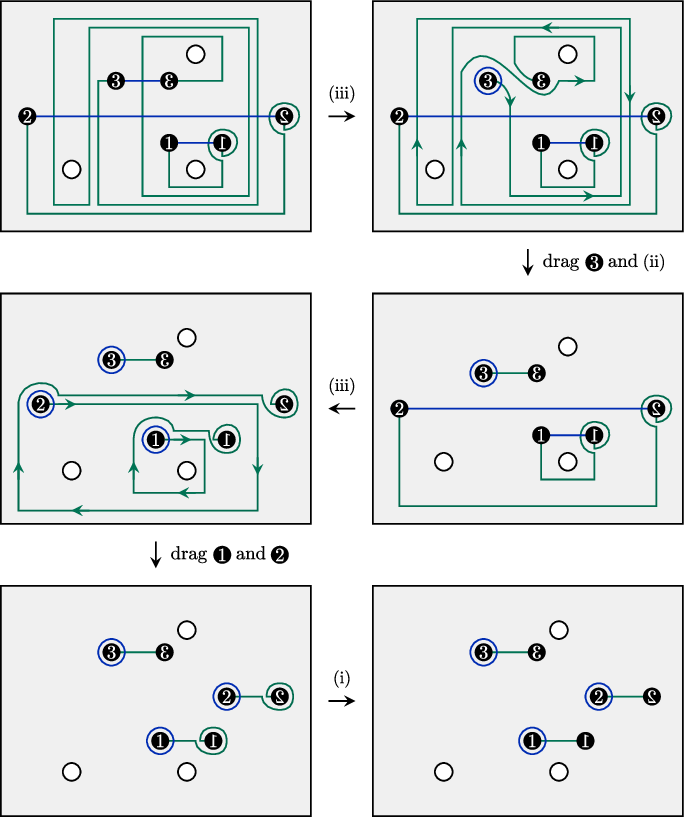}
\caption{Diffeomorphisms and handle slides proving $(\Sigma;\beta,\gamma)$ can be made standard.}
\label{fig:sigma-bc}
\end{figure}
\end{proof}

\begin{defi}\label{def:M_n}
 For each positive integer $n$, let $\calT_n$ be the $(3,3;0,4)$-relative trisection corresponding to $\calD_n$ by Theorem~\ref{thm:rt-rtd}, and let $M_n$ be the trisected $4$-manifold.
\end{defi}

\begin{lem}\label{lem:M_n-Kd}
Figure~\ref{fig:M_n-Kd} is a handlebody diagram of $M_n$.
\end{lem}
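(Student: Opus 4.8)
The plan is to read a handle decomposition of $M_n$ directly off the relative trisection diagram $\calD_n$ using the procedure of Kim and Miller \cite{KimMil20} for passing from relative trisection diagrams to handlebody diagrams, and then to simplify the resulting Kirby diagram to the form drawn in Figure~\ref{fig:M_n-Kd}. The input of that procedure is a relative trisection diagram together with an explicit description of its ``standard half''; by Lemma~\ref{lem:rtd-prf} we already know that $(\Sigma;\alpha,\beta)$ is the standard diagram of type $(3,3;0,4)$ (compare Figures~\ref{fig:sigma-ab} and \ref{fig:std3304}), so the $\alpha$ and $\beta$ curves contribute only the $0$--, $1$--handles (and dually the $3$--, $4$--handles) of a standard $\natural^3(S^1\times D^3)$-type piece together with the planar open book with pages $\Sigma_{0,4}$ on $\partial M_n$, none of which depends on $n$. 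The geometric content of $\calD_n$ is thus concentrated in the three $\gamma$ curves.

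First I would place the $\gamma$ curves inside this standard model and flatten the picture: redraw $\Sigma$ as a disk with one-handles, convert each black-disk pair (the cylinder attachments of Figure~\ref{fig:blackhole}) into an honest one-handle of $\Sigma$, and record the position of each $\gamma$ curve. Each $\gamma$ curve then becomes the attaching circle of a $2$-handle of $M_n$, framed by the surface framing induced from $\Sigma$; the dotted circles and the remaining handles produced by the Kim--Miller algorithm from the standard half are as described above and are independent of $n$. At this stage one has an explicit, if unreduced, Kirby diagram for $M_n$ whose only $n$-dependence is concentrated in a region where one $\gamma$ curve runs $n$ times through a handle of $\Sigma$.

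Next I would simplify. Several of the $\alpha$ (resp. $\gamma$) curves sit as geometrically dual curves on $\Sigma$, so the corresponding $1$- and $2$-handles cancel; I would carry out these cancellations one at a time, tracking at each step how they affect the framings of and the linking among the surviving curves and the remaining dotted circle. Since $M_n$ will turn out to be Mazur-type (cf.\ Theorem~\ref{thm:M_n-Mazur}), this collapses the diagram to a single dotted circle together with a single framed knot carrying the parameter $n$, which, after an isotopy and at most a few handle slides, I would match against Figure~\ref{fig:M_n-Kd}. (As an alternative route, one could instead run the Castro--Gay--Pinz\'on-Caicedo algorithm \cite{CasGayPin18_2} on Figure~\ref{fig:M_n-Kd}, obtain a relative trisection diagram, and check it is diffeomorphism and handle slide equivalent to $\calD_n$ after suitable destabilizations; by the uniqueness in Theorem~\ref{thm:rt-rtd} this proves the lemma equally well.)

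The main obstacle is the framing bookkeeping. The operations (i)--(iv) of Lemma~\ref{lem:rtd-prf} and the dragging of black disks along $\gamma$ curves alter the \emph{apparent} framing of a $2$-handle while leaving the ambient $4$-manifold unchanged, so I would fix one convention for the surface framing at the outset and apply it consistently through every step of the Kim--Miller procedure and through every handle cancellation. Getting the $n$-dependence of the surviving $2$-handle correct---it should appear in Figure~\ref{fig:M_n-Kd} as an $n$-fold twist (or clasp) region---is the delicate point; the handle cancellations and the final isotopy are then routine once the framings are under control.
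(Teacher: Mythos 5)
Your proposal takes essentially the same route as the paper: both apply the Kim--Miller algorithm to $\calD_n$, using that $(\Sigma;\alpha,\beta)$ is already standard, choosing a cut system of arcs disjoint from $\alpha\cup\beta$ to produce the dotted circles, and taking the $\gamma$ curves as surface-framed attaching circles of $2$-handles (the case $k-2p-b+1=0$ makes this the simple form of the algorithm). The only discrepancy is that Figure~\ref{fig:M_n-Kd} is the \emph{direct} output of that algorithm rather than a reduced Mazur-type picture, so the handle cancellations you describe are not needed for this lemma; they are carried out later, in the proof of Theorem~\ref{thm:M_n-Mazur}.
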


\begin{figure}[!htbp]
\centering
\includegraphics[scale=1.0]{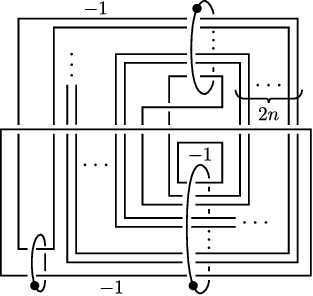}
\caption{The handlebody diagram of $M_n$ induced by $\calD_n$.}
\label{fig:M_n-Kd}
\end{figure}
\begin{figure}[!htbp]
\centering
\includegraphics[scale=1.0]{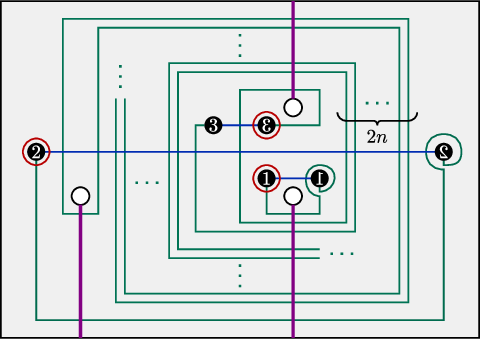}
\caption{$\calD_n$ and a cut system $\eta$.}
\label{fig:D_n-rtd-cutarcs}
\end{figure}

\begin{proof}
 By using the algorithm given by Kim and Miller \cite{KimMil20}, we obtain a handlebody diagram of $M_n$ as follows.
\begin{enumerate}
\item
 Perform diffeomorphisms of $\Sigma$ and handle slides so that $(\Sigma;\alpha,\beta)$ is standard, and then embed $(\Sigma;\alpha,\beta,\gamma)$ into $S^3$.
 The relative trisection diagram $\calD_n$ in Figure~\ref{fig:D_n-rtd} is already standard.
\item
Choose pairwise disjoint, properly embedded simple arcs $\eta\subset \Sigma$ that are disjoint from $\alpha$ and $\beta$ so that $\Sigma_{\alpha}-\nu(\eta;\Sigma) \cong \Sigma_{\beta}-\nu(\eta;\Sigma) \cong D^2$.
 Each of $\Sigma_\alpha$ and $\Sigma_\beta$ denotes the result of surgering $\Sigma$ along $\alpha$ and $\beta$ curves, respectively.
 We call such arcs $\eta$ a \textit{cut system} for $(\Sigma;\alpha,\beta)$.
 In our case, choose $\eta$ as the three purple arcs in Figure~\ref{fig:D_n-rtd-cutarcs}.
\item
 For each $\eta_i$, draw a dotted circle $C_i \subset \partial \nu(\Sigma;S^3)$ as shown in Figure~\ref{fig:choose-cutarcs} (i.e., under the identification $\nu(\Sigma;S^3)\cong \Sigma\times[-1,1]$, let $C_i:=\partial{(\eta_i\times[-1,1])}$).
\item
 Consider the $\gamma$ curves as attaching circles of $2$-handles with the surface framing of $\Sigma$.
\item
 Then $\{C_1,C_2,\ldots,C_{2p+b-1};\gamma_1,\gamma_2,\ldots,\gamma_{g-p}\}$ is a handlebody diagram of $M_n$.
\end{enumerate}

 Figure~\ref{fig:KMalg} shows this algorithm for the case $n=1$.
\end{proof}

\begin{figure}[!htbp]
\centering
\includegraphics[scale=1.0]{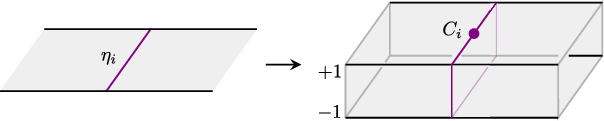}
\caption{How to obtain a dotted circle from a cut arc.}
\label{fig:choose-cutarcs}
\end{figure}
\begin{figure}[!htbp]
\centering
\includegraphics[scale=1.0]{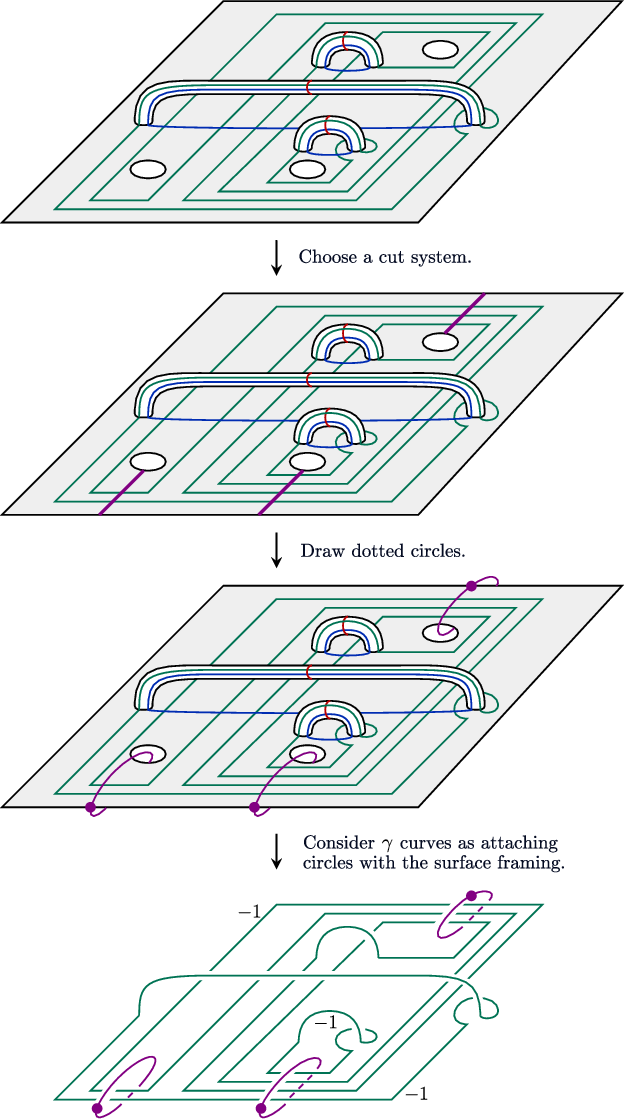}
\caption{The algorithm of \cite{KimMil20} for the case of $\calD_1$.}
\label{fig:KMalg}
\end{figure}

\begin{thm}\label{thm:M_n-Mazur}
 The family $\{M_n\}_{n\in \NN}$ satisfies the following conditions.
\begin{itemize}
\item Each $M_n$ is a Mazur-type $4$-manifold.
\item $M_1, M_2, \ldots$ are mutually non-homeomorphic.
\end{itemize}
\end{thm}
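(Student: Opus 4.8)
The plan is to read off the two claims directly from the handlebody diagram of $M_n$ given in Lemma~\ref{lem:M_n-Kd} (Figure~\ref{fig:M_n-Kd}). For the first bullet, I would observe that the Kim--Miller algorithm applied to a $(3,3;0,4)$-diagram produces, by step (5) in the proof of Lemma~\ref{lem:M_n-Kd}, exactly $2p+b-1 = 3$ dotted circles and $g-p = 3$ framed $2$-handle curves, so a priori $M_n$ has a handle decomposition with one $0$-handle, three $1$-handles, and three $2$-handles. To upgrade this to a Mazur-type decomposition (one $0$-, one $1$-, one $2$-handle), I would simplify the diagram in Figure~\ref{fig:M_n-Kd} by handle slides and by cancelling $1$-/$2$-handle pairs: each time a $2$-handle runs geometrically once over a dotted circle (an unknotted, $0$-framed $1$-handle), that pair cancels. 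I expect that two such cancellations are visible after an isotopy, leaving precisely one dotted circle and one $2$-handle, i.e., a Mazur-type diagram. (Along the way this also re-confirms $M_n$ is contractible, hence consistent with $\chi(M_n) = g-3k+3p+2b-1 = 3 - 9 + 0 + 8 - 1 = 1$ from Proposition~\ref{prop:Euler}.) The resulting one-$1$-handle, one-$2$-handle diagram should be the one whose dependence on $n$ is an evident ``$n$ full twists'' parameter in the $2$-handle's framing curve, mirroring the classical Mazur/Akbulut-cork pictures.

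For the second bullet, the strategy is to compute $\pi_1(\partial M_n)$ (equivalently the fundamental group of the $3$-manifold obtained by $0$-surgery-type considerations, or directly a presentation of $\pi_1(\partial M_n)$ from the simplified handle diagram) and exhibit an invariant of it that strictly depends on $n$. Since each $M_n$ is contractible, the manifolds are distinguished by their boundaries, and a Mazur-type diagram with twisting parameter $n$ typically gives $\partial M_n$ with $\pi_1(\partial M_n)$ a group whose abelianization is trivial but whose, say, first homology of a finite cover, Alexander-type invariant, or Casson invariant grows with $n$. Concretely I would write down the two-generator, one-relator presentation of $\pi_1(\partial M_n)$ coming from the simplified picture (one meridian relation from the re-inserted $1$-handle, one from the $2$-handle), and show that these groups are pairwise non-isomorphic — for instance by computing the number of surjections onto a fixed finite group, or by showing the ``$n$'' appears as the order of torsion in the homology of the double cover. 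Since $M_n \cong M_m$ would force $\partial M_n \cong \partial M_m$ and hence $\pi_1(\partial M_n) \cong \pi_1(\partial M_m)$, pairwise-distinct $\pi_1(\partial M_n)$ finishes the proof.

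The main obstacle is the second part: extracting a clean, manifestly $n$-dependent invariant from the family of boundary $3$-manifolds. The handle cancellations for the Mazur-type claim are routine Kirby calculus, and the output is a standard-looking family, but proving the $\pi_1(\partial M_n)$ are \emph{mutually} non-isomorphic (not merely infinitely many distinct) requires either a computable sequence of invariants (e.g. $|H_1|$ of a canonical cover, the Turaev torsion, the hyperbolic volume if the boundaries are hyperbolic) that is injective in $n$, or a direct group-theoretic argument. I would aim for the homological invariant of a fixed finite-index subgroup, as that is the most robust: if the double cover of $\partial M_n$ has $H_1$ of order a strictly monotone function of $n$, we are done immediately. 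If that fails to separate all terms, I would fall back on the Casson invariant of $\partial M_n$ (these are homology spheres since $M_n$ is contractible), which for such twisted Mazur boundaries is a non-constant polynomial in $n$ and hence distinguishes all but finitely many pairs; a finite check handles the rest.
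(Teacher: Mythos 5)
Your plan is essentially the paper's proof. For the first bullet the paper does exactly what you describe: Kirby-calculus simplification of the diagram from Lemma~\ref{lem:M_n-Kd} (cancelling $1$-/$2$-handle pairs) down to a one $0$-handle, one $1$-handle, one $2$-handle picture, with contractibility checked via Seifert--van Kampen and Mayer--Vietoris. For the second bullet the paper uses precisely your fallback option, the Casson invariant: it exhibits $\partial M_n$ as $+1$-surgery on a knot $K_n$, computes $\Delta_{K_n}(t)$ from a ribbon disk (Lemma~\ref{lem:Alex-K_n}), and applies the surgery formula to get $\lambda(\partial M_n)=-n(n+1)$, which is strictly monotone in $n$, so no residual finite check is needed.

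One correction to your stated preference order: your ``most robust'' primary option for the second bullet --- $|H_1|$ of the double cover, or more generally homology of a canonical finite cover --- is not available here. Since each $M_n$ is contractible, $\partial M_n$ is an integral homology sphere, so $H_1(\partial M_n)=0$ and there is no connected double cover at all (and no canonical finite cover to work with); likewise the two-generator one-relator presentation of $\pi_1(\partial M_n)$ has trivial abelianization, so distinguishing these groups directly (e.g.\ by counting surjections onto a fixed finite group) would require genuinely new computation with no guarantee of success. The Casson invariant (or, as Proposition~\ref{prop:M_n-hyp} later suggests, hyperbolic volume) is really the main route, not a fallback, and it is the one the paper takes.
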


 The following proof is inspired by the work of Oba \cite{Oba15}.

\begin{proof}
 Performing the handle moves in Figure~\ref{fig:M_n-Kcalc}, we obtain the handlebody diagram of $M_n$ consisting of one $0$-handle, one $1$-handle, and one $2$-handle.
 By the Seifert--van Kampen theorem and the Mayer--Vietoris sequence, we see that $\pi_1(M_n)$ is trivial and $H_*(M_n) \cong H_*(\mathrm{pt.})$.
 Thus $M_n$ is contractible.
 
 Next, we calculate the Casson invariant $\lambda(\partial{M_n})$.
 Perform the handle moves in Figure~\ref{fig:bdryM_n-Kcalc} and, let $K_n$ be the knot of the last diagram.
 We see that $\partial{M_n}$ is obtained by Dehn surgery along $K_n$ with coefficient $1$.
 By the surgery formula for Casson invariants (see \cite{Sav02b}), the following holds.
\begin{equation*}
 \lambda(\partial{M_n})=\lambda(S^3+\frac{1}{1} K_n) = \lambda(S^3)+\frac{1}{2}\Delta''_{K_n}(1),
\end{equation*}
 where $\Delta''_{K_n}(1)$ is the second derivative of the Alexander polynomial $\Delta_{K_n}(t)$ at $t=1$.
 By Lemma~\ref{lem:Alex-K_n} described later, $\Delta''_{K_n}(1) = -2n(n+1)$.
 Hence we see that $\lambda(\partial{M_n}) = -n(n+1)$.
 Thus, $\partial{M_1}, \partial{M_2}, \ldots$ are mutually non-homeomorphic, so $M_1, M_2, \ldots$ are also mutually non-homeomorphic.
\begin{figure}[tbp]
\centering
\includegraphics[scale=1.0]{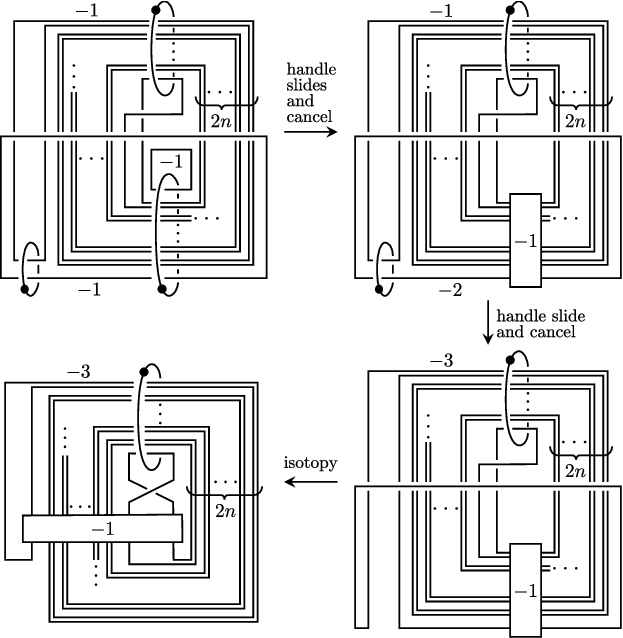}
\caption{Handle moves of $M_n$.}
\label{fig:M_n-Kcalc}
\end{figure}
\begin{figure}[!htbp]
\centering
\includegraphics[scale=1.0]{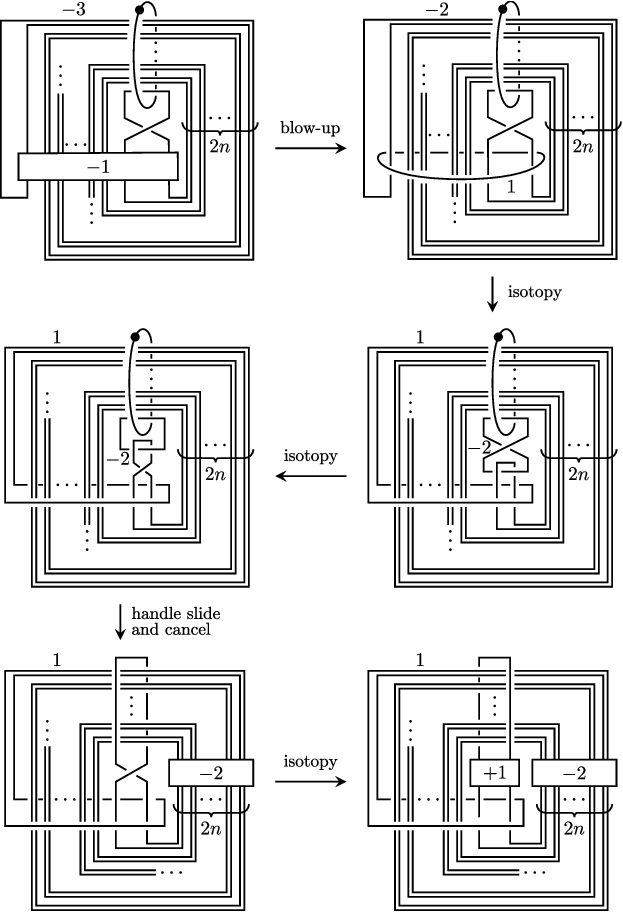}
\caption{Handle moves of $\partial{M_n}$.}
\label{fig:bdryM_n-Kcalc}
\end{figure}
\end{proof}

\begin{lem}\label{lem:Alex-K_n}
 The Alexander polynomial of the knot $K_n$ is given by
\begin{equation*}
 \Delta_{K_n}(t) = -n(n+1)t+(2n^2+2n+1)-n(n+1)t^{-1}.
\end{equation*}
\end{lem}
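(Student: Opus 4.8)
\emph{Plan.} The knot $K_n$ is already drawn: it is the last diagram of Figure~\ref{fig:bdryM_n-Kcalc}, and in that diagram the parameter $n$ appears only as a localized block of full twists in a fixed region. So the task is purely computational. I would proceed in three steps: (1) isotope $K_n$ into a standard one-parameter form in which the twisting is confined to one (or two parallel) twist regions; (2) compute the Alexander polynomial of the resulting family, either through a Seifert surface whose Seifert matrix depends on $n$, or by a skein recursion in $n$, or by recognizing $K_n$ as a two‑bridge (rational) knot and quoting the classical formula; and (3) fix all signs and the normalization by checking the small cases $n=1$ and $n=2$ together with the built‑in constraints $\Delta_{K_n}(t)=\Delta_{K_n}(t^{-1})$ and $\Delta_{K_n}(1)=\pm1$ (note $-n(n+1)(t+t^{-1})+(2n^2+2n+1)$ indeed evaluates to $1$ at $t=1$).

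\emph{Main line.} I expect $K_n$ to bound a genus‑one Seifert surface --- a disc with two bands, one band carrying the $n$‑dependent twisting and the bands meeting in a clasp, the shape typical for boundaries of Mazur‑type $4$‑manifolds. Reading the linking form of a homology basis $\{x_1,x_2\}$ of that surface gives a $2\times2$ Seifert matrix $V=V(n)$ whose entries record the self‑twisting of the bands (this is where the quadratic dependence $n(n+1)$ enters) and the single clasp (forcing $V-V^{T}$ to have determinant $\pm1$). Then $\Delta_{K_n}(t)\doteq\det(V-tV^{T})$ is an elementary $2\times2$ determinant; expanding it and matching it to $-n(n+1)(t+t^{-1})+(2n^2+2n+1)$, with the base cases used to pin down the handedness conventions, yields the lemma. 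As an independent check (and an alternative that avoids producing the surface explicitly), the skein triple obtained by switching one crossing inside the twist region relates $\nabla_{K_n}$, $\nabla_{K_{n-1}}$, and the Conway polynomial of the link $L_0$ obtained by oriented smoothing; iterating inside the region produces a short linear recursion in $n$, and solving it and substituting $z=t^{1/2}-t^{-1/2}$ in $\nabla_{K_n}(z)=\Delta_{K_n}(t)$ recovers the same polynomial.

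\emph{Main obstacle.} The genuine work is in step (1): carrying $K_n$ through the handle moves of Figure~\ref{fig:bdryM_n-Kcalc} so that the $n$‑dependence becomes a single clean twist region rather than several scattered crossing changes, and tracking orientations and the signs of the twists carefully enough that the surgery coefficient really comes out $+1$ and the resulting Alexander polynomial has the stated leading coefficient and constant term. Once the diagram of $K_n$ is correctly normalized, the polynomial computation --- by the $2\times2$ Seifert determinant, the skein recursion, or the two‑bridge formula --- is routine, and the symmetry $\Delta_{K_n}(t)=\Delta_{K_n}(t^{-1})$ together with $\Delta_{K_n}(1)=1$ provide immediate consistency checks on the final answer.
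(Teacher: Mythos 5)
Your plan is sound in outline and would, if executed, prove the lemma, but it takes a genuinely different route from the paper. The paper exploits the fact that $K_n$ is a \emph{ribbon} knot: it exhibits an immersed ribbon disk $D_n$ with two disks and $2n+1$ bands, invokes the Fox--Milnor factorization $\Delta_{K_n}(t)=f(t)f(t^{-1})$, and computes $f$ directly from the band data via Terasaka's determinant algorithm; the $(2n+1)\times(2n+1)$ determinant collapses under row operations to $f(t)=-(n+1)t+n$, whence $f(t)f(t^{-1})=-n(n+1)(t+t^{-1})+(2n^2+2n+1)$. This sidesteps entirely the step you correctly flag as the real obstacle --- normalizing the diagram of $K_n$ into a clean twist-region form and producing an explicit Seifert surface or skein recursion --- because the ribbon disk is read off from the handle diagram without that normalization, and it packages the $n$-dependence into a single structured determinant rather than into band framings you would have to track by hand. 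Your approach buys generality (it needs no ribbon structure) and builds in good consistency checks ($\Delta(t)=\Delta(t^{-1})$, $\Delta(1)=1$, and indeed a genus-one Seifert matrix of the form $\left(\begin{smallmatrix}0&n\\ n+1&c\end{smallmatrix}\right)$ reproduces the stated polynomial), but as written it remains a strategy: you assert rather than establish that $K_n$ bounds the expected genus-one surface, and none of the three proposed computations (Seifert matrix, skein recursion, two-bridge formula) is actually carried out, so the quadratic coefficient $n(n+1)$ is matched to the answer rather than derived. To turn this into a proof you would need to commit to one of the three methods and do the diagram work you yourself identify as the hard part; the paper's ribbon-disk argument is the shorter path precisely because it avoids that work.
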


\begin{proof}
  Let $D_n\subset \RR^3$ be the immersed disk in Figure~\ref{fig:ribbondisk}.
 Since $\partial{D_n}=K_n$, the knot $K_n$ is ribbon and $D_n$ is its ribbon disk.
 By Fox and Milnor \cite[Theorem~2]{FoxMil66}, there exists a polynomial $f(t)$ with integer coefficients such that $\Delta_{K_n}(t)=f(t)f(t^{-1})$.
 We now calculate $f(t)$ by using the algorithm of Terasaka \cite{Ter59}.
 First orient the knot $K_n$, and decompose $D_n$ to two disks $A,C$ and $2n+1$ bands $B_1, B_2, \ldots, B_{2n+1}$ as shown in Figure~\ref{fig:ribbondisk}, where full twists of bands are represented by Figure~\ref{fig:fulltwist}.
 Then $f(t)$ is given by the following determinant of a $(2n+1)\times(2n+1)$ matrix.
\begin{equation*}
 f(t) = 
\begin{vmatrix}
-t^{\delta_1} &  & &  &           & t^{\delta^c_1}-1    \\
1        & -t^{\delta_2} &         &    &     \text{\huge{0}}   & t^{\delta^c_2}-1             \\
        & 1  & -t^{\delta_3} &  &     & t^{\delta^c_3}-1    \\
          &     &    &    \ddots   &  & \vdots \\
          & \text{\huge{0}}      &   &                  & -t^{\delta_{2n}}  & t^{\delta^c_{2n}}-1 \\
 &       &    &   &                 1          & -1 \\
\end{vmatrix}.
\end{equation*}
 Note that each of $\delta_i$ and $\delta^c_i$ is given as in Figure~\ref{fig:Terasaka_alg}.
 In the case of Figure~\ref{fig:ribbondisk}, $\delta_i=\delta^c_i=-1$ if $i$ is odd, and $\delta_i=1$, $\delta^c_i=0$ if $i$ is even.
 To compute this determinant, add all odd rows multiplied by $t$ and all even rows to the lowermost row. Then, all elements of the lowermost row become $0$ except for the rightmost one.
 By the cofactor expansion along the lowermost row, we obtain $f(t)=-(n+1)t+n$, so the formula of the lemma follows.
\begin{figure}[tbp]
\centering
\includegraphics[scale=1.0]{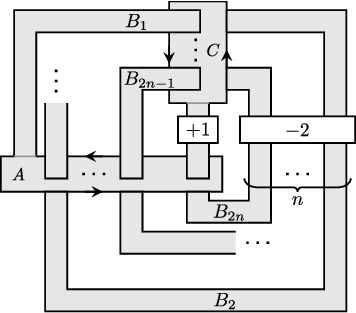}
\caption{A ribbon disk $D_n$ of $K_n$.}
\label{fig:ribbondisk}
\end{figure}
\begin{figure}[tbp]
\centering
\includegraphics[scale=0.9]{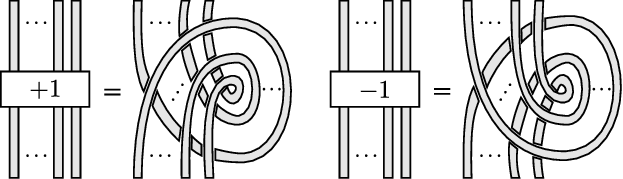}
\caption{Full twists of bands.}
\label{fig:fulltwist}
\end{figure}
\begin{figure}[tbp]
\centering
\includegraphics[scale=1.0]{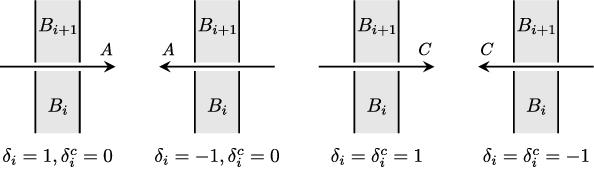}
\caption{How to determine $\delta_i$ and $\delta^c_i$.}
\label{fig:Terasaka_alg}
\end{figure}
\end{proof}

\begin{thm}\label{thm:M_n-cork}
Each $M_n$ is a cork and admits a Stein structure.
\end{thm}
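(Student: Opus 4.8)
The two assertions---that $M_n$ admits a Stein structure and that it is a cork---I would prove separately, both starting from the one-$1$-handle, one-$2$-handle presentation of $M_n$ obtained in Lemma~\ref{lem:M_n-Kd} and simplified in Figure~\ref{fig:M_n-Kcalc}. For the Stein structure I would isotope this diagram into the standard form of Gompf's handle calculus for Stein surfaces: draw the $1$-handle in the flat ``Legendrian'' convention and place the attaching circle of the $2$-handle in Legendrian position, realised by a front projection. By Gompf's criterion, $M_n$ then admits a Stein structure inducing the given orientation as soon as this Legendrian knot can be drawn with Thurston--Bennequin invariant one greater than the framing prescribed by the diagram. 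Since the attaching circle is an explicit knot and $M_n$ is Mazur-type, this reduces to a single computation on a front projection, which I expect to succeed uniformly in $n$, in the same way as the known Stein structures on Mazur-type corks; I view this step as routine.

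For the cork property I would first produce the involution. After the simplification of Figure~\ref{fig:M_n-Kcalc}, the diagram of $M_n$ is a \emph{symmetric link}: a dotted circle and a $0$-framed circle interchanged by an orientation-preserving self-diffeomorphism of $S^3$. Exactly as in Akbulut's construction of $(W_1,f_1)$ (Definition~\ref{def:ac}), ``replacing the dot and the $0$'' then defines a self-diffeomorphism $\tau_n$ of $\partial M_n$, which is an involution because that operation is its own inverse. Since $M_n$ is contractible (Theorem~\ref{thm:M_n-Mazur}), Freedman's classification of simply connected topological $4$-manifolds implies, as is standard for corks, that $\tau_n$ extends to a self-homeomorphism of $M_n$; moreover $M_n$ is not the $4$-ball, as $\partial M_n\neq S^3$ because $\lambda(\partial M_n)=-n(n+1)\neq 0$. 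Thus the whole content of the cork property is that $\tau_n$ does \emph{not} extend to a self-diffeomorphism of $M_n$.

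This is the main obstacle. To obstruct such an extension I would embed $M_n$ in a suitable closed $4$-manifold $X$ of simple type with $b_2^+>1$ for which the cork twist $X':=(X\setminus M_n)\cup_{\tau_n} M_n$ is not diffeomorphic to $X$; since any self-diffeomorphism of $M_n$ restricting to $\tau_n$ on the boundary would force $X'\cong X$, exhibiting one such $X$ completes the proof. Here the Stein structure and the Alexander polynomial of $K_n$ from Lemma~\ref{lem:Alex-K_n} are the natural tools: following the approach of Akbulut and Yasui to Stein corks, a Stein domain embeds in a minimal K\"{a}hler surface of general type (which can be taken to have $b_2^+>1$), and one arranges the embedding so that the cork twist along $(M_n,\tau_n)$ effects a Fintushel--Stern knot surgery with the ribbon knot $K_n$. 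As $\Delta_{K_n}(t)=-n(n+1)t+(2n^2+2n+1)-n(n+1)t^{-1}$ is not $1$, the Seiberg--Witten invariants of $X$ and $X'$ disagree, hence $X'\not\cong X$. The delicate point is precisely to build the embedding so that the twist has this controlled effect on Seiberg--Witten invariants; once it is in place, the non-extension of $\tau_n$, and with it the cork property, is immediate.
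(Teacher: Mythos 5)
Your treatment of the Stein structure and of the construction of the involution follows the paper's route: the paper performs explicit handle moves (Figures~\ref{fig:M_n-k.calc2} and \ref{fig:akmove}) to put the attaching circle in Legendrian position with Thurston--Bennequin number $1$ and invokes Gompf's criterion, and it isotopes the diagram to a symmetric two-component link (Figures~\ref{fig:M_n-Kcalc3} and \ref{fig:M_n-induc}) so that $\tau_n$ is the $180^\circ$ rotation. For the extension of $\tau_n$ to a self-homeomorphism the paper cites Boyer's theorem rather than Freedman directly, but that is a cosmetic difference. So far your proposal is sound, modulo the fact that the Legendrian realization is an explicit computation that must actually be exhibited for all $n$, not just expected to work.

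The genuine gap is in the non-extension step. The paper's argument is short and established: since $M_n$ is a Stein domain presented by a symmetric link, it applies Corollary~2.1 of Akbulut--Matveyev (the adjunction inequality for Stein domains, proved via embeddings into minimal K\"ahler surfaces), following verbatim the argument of their Theorem~3.1 for the Akbulut cork. Your proposal instead asks to build an embedding of $M_n$ into a closed manifold $X$ so that the cork twist ``effects a Fintushel--Stern knot surgery with the ribbon knot $K_n$.'' Nothing supports this: $K_n$ enters the paper only as a knot in $S^3$ on which $1$-surgery yields $\partial M_n$ (used to compute the Casson invariant in Theorem~\ref{thm:M_n-Mazur}); there is no reason a cork twist along $(M_n,\tau_n)$ inside any closed $4$-manifold should be realized as knot surgery along $K_n$, and you yourself identify this as the unproved ``delicate point.'' As written, the heart of the cork property is therefore not established. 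The fix is exactly the paper's move: once the Stein structure and the symmetric link presentation are in hand, the non-extension follows from the Akbulut--Matveyev adjunction argument, with no need to engineer a knot-surgery embedding.
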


\begin{proof}
 First, we prove that $M_n$ admits a Stein structure.
 Recall that the last diagram of Figure~\ref{fig:M_n-Kcalc} represents $M_n$.
 Perform the handle moves in Figure~\ref{fig:M_n-k.calc2}, where $(*)$ is the operation shown in Figure~\ref{fig:akmove} that was introduced in \cite{AkbKir79}. 
 For the last diagram of Figure~\ref{fig:M_n-k.calc2}, converting the $1$-handle notation, we obtain the Legendrian knot diagram in Figure~\ref{fig:M_n-Legendre}.
 Since the Thurston--Bennequin number is $1$, $M_n$ admits a Stein structure (\cite[Proposition~2.3]{Gom98}).
\begin{figure}[tbp]
\centering
\includegraphics[scale=1.0]{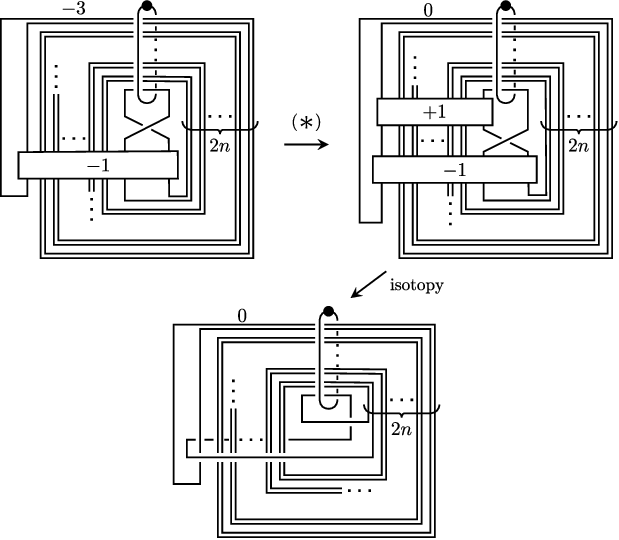}
\caption{Handle moves of $M_n$.}
\label{fig:M_n-k.calc2}
\end{figure}
\begin{figure}[tbp]
\centering
\includegraphics[scale=1.05]{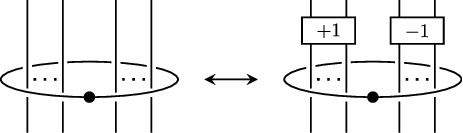}
\caption{The operation $(*)$.}
\label{fig:akmove}
\end{figure}
\begin{figure}[!tbp]
\centering
\includegraphics[scale=1.0]{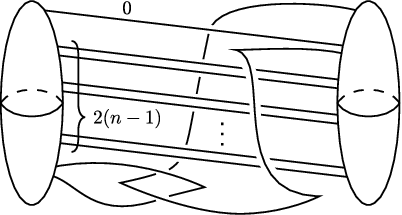}
\caption{A Legendrian knot diagram of $M_n$.}
\label{fig:M_n-Legendre}
\end{figure}

 Next, we prove that $M_n$ admits a cork structure.
 By the isotopies in Figure~\ref{fig:M_n-Kcalc3}, we obtain the diagram consisting of the $2$-component symmetric link.
 Note that the second isotopy is obtained by repeating the operation of Figure~\ref{fig:M_n-induc} $n-1$ times.
 Let $\tau_n:\partial{M_n}\to\partial{M_n}$ be the involution induced by $180^\circ$ rotation about the horizontal axis.
 Recall that $M_n$ is contractible and $\partial{M_n}$ is a homology $3$-sphere.
 By Boyer's theorem \cite{Boy86}, $\tau_n$ extends to a self-homeomorphism of $M_n$.
 Since $M_n$ is Stein, we can use Corollary~2.1 in \cite{AkbMat97} to prove that $\tau_n$ cannot extend to a self-diffeomorphism of $M_n$.
 For details of this argument, see the proof of Theorem~3.1 in \cite{AkbMat97}.
\end{proof}

\begin{figure}[!tbp]
\centering
\includegraphics[scale=1.0]{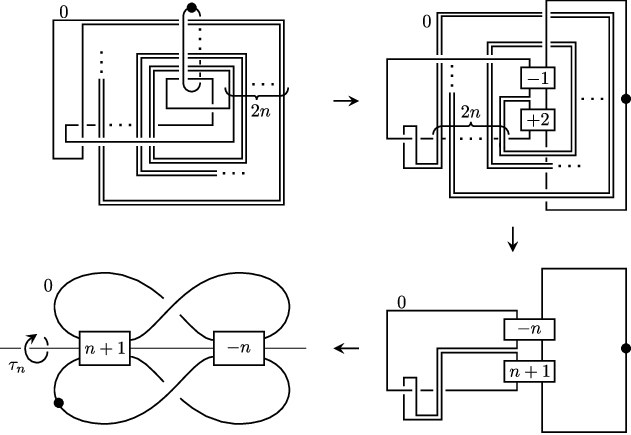}
\caption{Isotopies of $M_n$.}
\label{fig:M_n-Kcalc3}
\end{figure}
\begin{figure}[!tbp]
\centering
\includegraphics[scale=1.0]{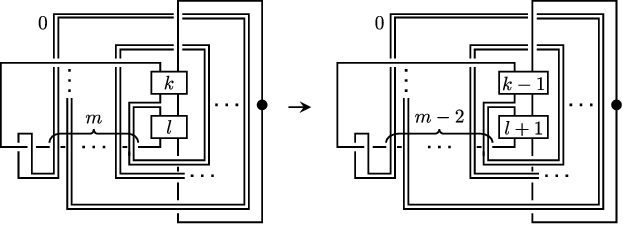}
\caption{Isotopies of $M_n$.}
\label{fig:M_n-induc}
\end{figure}

 We can easily check that the last diagram in Figure~\ref{fig:M_n-Kcalc3} for $n=1$ is isotopic to Figure~\ref{fig:Ac-Kd}, so $M_1$ is diffeomorphic to the Akbulut cork $W_1$.

\begin{rem}
 The $4$-manifolds $M_1,M_2,\ldots$ are already known corks.
 These were discovered by Dai, Hedden, and Mallick \cite[Theorem~1.12]{DaiHedMal20a}.
 They proved that $M_n$ admits a cork structure by using Heegaard Floer homology.
\end{rem}

\begin{prop}\label{prop:M_n-hyp}
 Each $\partial{M_n}$ is a hyperbolic $3$-manifold.
\end{prop}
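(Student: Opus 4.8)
\emph{Proof proposal.}
The plan is to realize each $\partial M_n$ as a Dehn filling of one fixed cusped hyperbolic $3$-manifold, apply Thurston's hyperbolic Dehn surgery theorem to handle all large $n$, and dispose of the finitely many remaining cases by a direct (verified) computation.

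First I would return to the handle calculus used in Theorem~\ref{thm:M_n-cork}. From Figure~\ref{fig:bdryM_n-Kcalc} we know that $\partial M_n$ is obtained by $+1$-surgery on the knot $K_n$, and from the ribbon-disk picture of Figure~\ref{fig:ribbondisk} together with the inductive step of Figure~\ref{fig:M_n-induc} the knots $\{K_n\}$ form a twist family: there is a fixed knot $K'\subset S^3$, disjoint from a fixed unknot $c$ which bounds a disk meeting $K'$ transversely in a controlled number of points, such that $-1/n$ surgery on $c$ carries $K'$ to $K_n$. Consequently
\[
  \partial M_n \;\cong\; S^3_{(a,\,-1/n)}(K'\cup c),
\]
where $a$ is the fixed integer framing on $K'$ induced by the $+1$-framing on $K_n$ (computed from $\mathrm{lk}(K',c)$). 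Filling the $K'$-cusp of $S^3\setminus(K'\cup c)$ along the slope $a$ produces a one-cusped $3$-manifold $N'$ with $\partial M_n\cong N'(-1/n)$ for every $n$.

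Second I would show that $N'$ is hyperbolic. Presenting $N'$ as the complement of an explicit link (the link $K'\cup c$ with the $a$-surgery absorbed), one verifies with SnapPy that it carries a complete finite-volume hyperbolic structure and upgrades the numerical solution to a rigorous one via a verified-computation routine (HIKMOT / SnapPy's \texttt{verify}); a geometric triangulation or angle structure would serve equally well. Verifying $N'$ directly also certifies that none of the fixed fillings we performed created a Seifert-fibered or toroidal manifold. Then Thurston's hyperbolic Dehn surgery theorem applies to $N'$: all but finitely many fillings $N'(s)$ are hyperbolic, and since the slopes $-1/n$ converge to the meridian as $n\to\infty$, there is $n_0$ with $\partial M_n=N'(-1/n)$ hyperbolic for all $n\ge n_0$. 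For the short list of remaining values $n<n_0$ I would check hyperbolicity of $\partial M_n=S^3_1(K_n)$ one manifold at a time, again with verified SnapPy computations.

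The main obstacle is the content of the second step: one must actually pin down the fixed link $K'\cup c$ from the handle pictures and then carry out a rigorous, computer-assisted certification that $N'$ — and each of the finitely many small-$n$ surgeries — is hyperbolic, in particular ruling out essential tori and Seifert fibrations. With the explicit link in hand this is routine using current verified-hyperbolicity software; everything else in the argument is formal, following from the handle diagrams already drawn and Thurston's theorem.
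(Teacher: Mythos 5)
Your strategy is sound and would prove the proposition, but it takes a genuinely different route from the paper. The paper does not set up a twist family with a single varying slope; instead (Figure~\ref{fig:bdryM_n-Kcalc2}) it exhibits $\partial M_n$ as surgery on the fixed $2$-bridge link $L_{[2,-2,2]}$ with \emph{both} coefficients varying, $\{2-\tfrac{1}{n+1},\,2+\tfrac{1}{n}\}$, proves $L_{[2,-2,2]}$ is hyperbolic by Menasco's criterion for alternating links, and then rules out exceptional fillings for \emph{every} $n$ at once by appealing to the classification of (complete) exceptional surgeries on hyperbolic $2$-bridge links due to Ichihara--Jong--Masai and Ichihara, checking the one surviving candidate by a Casson invariant computation and a known hyperbolicity fact. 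What the paper's route buys is a citable, computer-free argument covering all $n$ uniformly; what your route buys is independence from that classification (which the paper notes is a preprint) at the cost of resting the proof on verified numerical computation. Two caveats if you pursue your plan: first, your normalization (one fixed slope $a$ on $K'$, slopes $-1/n$ on an unknotted twisting circle $c$) must actually be extracted from Figures~\ref{fig:M_n-Kd} and \ref{fig:M_n-induc} and reconciled with the paper's two-variable surgery description --- plausible via Rolfsen twists, but not automatic; second, Thurston's theorem as stated gives no explicit $n_0$, so to make the finite check feasible you need an effective bound, e.g.\ via the $6$-theorem applied to a verified cusp neighborhood of $N'$ (the slopes $-1/n$ have normalized length growing with $n$), which implicitly invokes geometrization. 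The paper itself concedes in Remark~\ref{rem:Thurston} that the bare Thurston argument only yields hyperbolicity for infinitely many $n$ --- enough for Theorem~\ref{mainthm:corks} but not for the proposition as stated --- so the effective-bound step is precisely where your proposal must do real work.
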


\begin{proof}
 Performing the handle moves shown in Figure~\ref{fig:bdryM_n-Kcalc2}, we see that $\partial{M_n}$ is homeomorphic to the $3$-manifold obtained by Dehn surgery along the $2$-bridge link $L_{[2,-2,2]}$ with coefficients $\{2-\frac{1}{n+1}, 2+\frac{1}{n} \}$.
 For the notation $L_{[a_1,a_2,\ldots,a_k]}$, see Figures~1 and 2 in \cite{IchJonMas19a}.
 Since the type of the $2$-bridge link $L_{[2,-2,2]}$ is $(5, 12)$, so it is not $(2,n)$-torus link.
 By a result of Menasco \cite[Corollary~2]{Men84}, if a link $L$ is non-split, prime, alternating, and non-torus, then $L$ is hyperbolic.
 Thus $L_{[2,-2,2]}$ is a hyperbolic link.

 We prove that our surgeries are not exceptional for any $n\in\NN$.
 An \textit{exceptional surgery} is a Dehn surgery along a hyperbolic link with coefficients such that the resulting $3$-manifold is non-hyperbolic.
 In particular, an exceptional surgery along a link $L$ with coefficients is called \textit{complete}, if for any non-empty sublink $L'$, the $3$-manifold obtained from $S^3-\nu(L';S^3)$ by Dehn surgery along $L-L'$ is hyperbolic.
 Ichihara, Jong, and Masai \cite{IchJonMas19a} gave a complete list of hyperbolic $2$-bridge links that can admit complete exceptional surgeries. They also listed candidates of surgery coefficients of them. In addition, Ichihara \cite{Ich12} classified exceptional surgeries along components of hyperbolic $2$-bridge links.

 We verify that our surgeries are not included in these lists.
 It is known that two non-trivial $2$-bridge links of types $(p, q)$ and $(p', q')$ are isotopic if and only if $q=q'$ and either $p\equiv p'$ or $pp'\equiv1 \pmod q$ (see Section~2.1 of \cite{Kaw96b}).
 By using this, we see that $L_{[2,-2,2]}$ is not included in the list of \cite{Ich12}.
 However, among the $2$-bridge links in the list of \cite{IchJonMas19a}, only $L_{[3,2,3]}$ is isotopic to $L_{[2,-2,2]}$, which has $11$ candidate coefficients that would be a complete exceptional surgery.
 The first homology groups of the $3$-manifolds obtained by the surgeries with these coefficients are non-trivial except for one of the coefficients $\{ -3,-1 \}$.
 We calculate the Casson invariant of this exception.
 By blowing down a $-1$ framed circle shown in Figure~\ref{fig:5_2}, it can be represented by the mirror image of the knot $5_2$ with coefficient $1$.
 It is known that the Alexander polynomial of the knot $5_2$ is $2t-3+2t^{-1}$ (see \cite{Rol76b}).
 Hence the Casson invariant is $\frac{1}{2}\Delta''_{5_2}(1)=2$.
 Thus, it could be homeomorphic to the boundary of the Akbulut cork $\partial{M_1}$, since $\lambda(\partial{M_n})=-n(n+1)$.
 However, it is known to be hyperbolic (\cite[Theorem~1.2]{KarObaUki17}).
\begin{figure}[!htbp]
\centering
\includegraphics[scale=1.0]{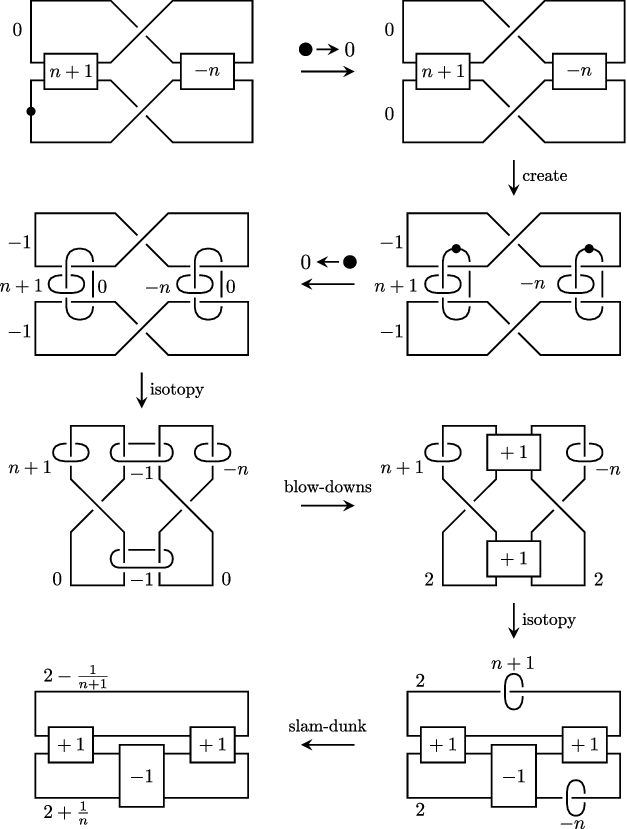}
\caption{Handle moves of $\partial{M_n}$.}
\label{fig:bdryM_n-Kcalc2}
\end{figure}
\begin{figure}[!htbp]
\centering
\includegraphics[scale=1.0]{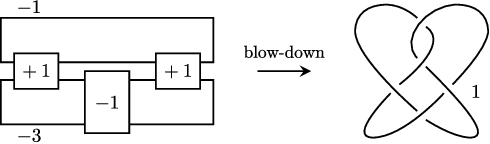}
\caption{A blowing down of a $-1$ framed circle.}
\label{fig:5_2}
\end{figure}
\end{proof}

\begin{rem}\label{rem:Thurston}
 Without using the result of the preprint \cite{IchJonMas19a}, we see that there are infinitely many hyperbolic $3$-manifolds contained in $\{\partial{M_n}\}_{n\in\NN}$.
 By Thurston's hyperbolic Dehn surgery theorem (\cite[Theorem~5.8.2]{Thu02b}), there exist finite subsets $E_1$ and $E_2$ of $\QQ$ such that, for any rational numbers $r_1\notin E_1$ and $r_2\notin E_2$, the $3$-manifold obtained by surgery along $L_{[2,-2,2]}$ with coefficients $\{ r_1,r_2 \}$ is hyperbolic.
 Thus, $\partial{M_n}$ is hyperbolic for any positive integer $n$ satisfying $2-\frac{1}{n+1}\notin E_1$ and $2+\frac{1}{n}\notin E_2$, and the set of such $n$ is infinite.
\end{rem}

\section{A lower bound for the trisection genus of a $4$-manifold with boundary}\label{sec:lowerbound}

 In this section, we prove that the trisection genus of $M_n$ is greater than $2$.
 The next lemma narrows down the $4$-tuple $(g,k;p,b)$ of relative trisections.

\begin{lem}\label{lem:rtd-Euler}
 Let $X$ be a compact, connected, oriented, smooth $4$-manifold with connected boundary.
 Suppose that $X$ admits a $(g,k;p,b)$-relative trisection.
 Let $A(g,k;p,b)$ be the number of pairs $(\delta_i,\epsilon_i)$ such that $\delta_i$ and $\epsilon_i$ intersect at one point in Figure~\ref{fig:std-rtd} (i.e., $A(g,k;p,b) := g+p+b-1-k$).
 Then $g,k,p,b$, and $A(g,k;p,b)$ satisfy the following conditions.
\begin{itemize}
\item
 $g\geq\chi(X)-1$.
\item
 $0\leq p \leq \mathrm{min}\{ \frac{g+1-\chi(x)}{3}, g\}$.
\item
 $\frac{2g-1+\chi(X)}{3} \leq A(g,k;p,b) \leq g-p$.
\item
 $k=1-\chi(X)-g+p+2A(g,k;p,b)$.
\item
 $b=3A(g,k;p,b)-2g+2-\chi(X)$.
\end{itemize}
\end{lem}

\begin{proof}
 The fourth and fifth conditions are obtained by combining the definition of $A(g,k;p,b)$ and the formula of Proposition~\ref{prop:Euler}.
 Since $b\geq1$, we see that $\frac{2g-1+\chi(X)}{3} \leq A(g,k;p,b)$, and Figure~\ref{fig:std-rtd} shows that $A(g,k;p,b) \leq g-p$.
 Hence the third condition holds.
 It also follows that $\frac{2g-1+\chi(X)}{3} \leq g-p$, so we obtain $p \leq \frac{g+1-\chi(x)}{3}$.
 Since $0\leq p \leq g$ (see Definition~\ref{def:rt}), the first and second conditions hold.
\end{proof}

\begin{proof}[Proof of Theorem~\ref{mainthm:lowerbound}]
 Assume that the trisection genus of $X$ is less than $\chi(X)+b-1$.
 Then there exist integers $k$, $p$, and $b'$ such that $X$ admits a $(\chi(X)+b-2, k; p, b')$-relative trisection.
 Using Lemma~\ref{lem:rtd-Euler}, we easily see that $k\leq b-1$, $p=0$, and $b'\leq b$.
 So $\partial{X}$ admits a planar open book decomposition with $b$ binding components.
 This is a contradiction.
\end{proof}

 Let $M$ be a closed $3$-manifold that admits a planar open book decomposition with binding number $b$.
 It is known that $M$ is the $3$-sphere if $b=1$, and $M$ is a lens space if $b=2$ (see \cite{EtnOzb08}).
 In addition, if $b=3$, $M$ is a Seifert fiber space or the connected sum of two lens spaces (see \cite{Ari08}).
 Combining these facts with Lemma~\ref{lem:rtd-Euler} and Theorem~\ref{mainthm:lowerbound}, we obtain the following corollary.

\begin{cor}\label{cor:lowerbound}
 For a $4$-manifold $X$ with boundary, if $\partial{X}$ is irreducible and not Seifert fibered, then the trisection genus of $X$ is greater than $\chi(X)+1$.
\end{cor}

\begin{rem}
 Theorem~\ref{mainthm:lowerbound} also holds for trisection genus of unbalanced relative trisections.
 A $(g,k_1,k_2,k_3;p,b)$-unbalanced relative trisection induces an open book decomposition with page $\Sigma_{p,b}$.
  In Lemma~\ref{lem:rtd-Euler}, by defining $A_i(g,k_i;p,b) := g+p+b-1-k_i$ for $i\in\{1,2,3\}$, we obtain the same result.
\end{rem}

\begin{proof}[Proof of Theorem~\ref{mainthm:corks}]
 By Proposition~\ref{prop:M_n-hyp}, each $\partial{M_n}$ is hyperbolic.
 It is known that a hyperbolic $3$-manifold is irreducible and not Seifert fibered (see \cite{Mil62} and \cite{Thu02b}), so we can use Corollary~\ref{cor:lowerbound}.
 Since the Euler characteristic of a contractible $4$-manifold is $1$, the trisection genus of $M_n$ is greater than $2$.
 On the other hand, $\calT_n$ is a genus $3$ relative trisection of $M_n$ (see Definition~\ref{def:M_n}).
 Therefore, we conclude that the trisection genus of $M_n$ is $3$ for any $n\in\NN$.
\end{proof}

\begin{rem} 
 For the proof of the hyperbolicity of $\partial{M_n}$, we used the result of the preprint \cite{IchJonMas19a}.
 Without using this, we can show that there are infinitely many corks with trisection genus $3$ by Remark~\ref{rem:Thurston}.
\end{rem}

\begin{proof}[Proof of Corollary~\ref{maincor:bdryOB}]
 Recall that $\calT_n$ is a $(3,3;0,4)$-relative trisection of $M_n$.
 By Lemma~\ref{lem:obd}, $\partial{M_n}$ admits an open book decomposition with page $\Sigma_{0,4}$.
 If $\partial{M_n}$ admits a planar open book decomposition of the number of binding components less than $4$, then it also contradicts Proposition~\ref{prop:M_n-hyp}.
\end{proof}

 Finally, we introduce a Heegaard splitting of $\partial{M_n}$ induced by our relative trisections.

\begin{lem}\label{prop:rt-Hspl}
 Let $X$ be a compact, connected, oriented, smooth $4$-manifold with connected boundary.
 If $X$ admits a $(g,k;p,b)$-relative trisection, then $\partial{X}$ admits a genus $2p+b-1$ Heegaard splitting.
\end{lem}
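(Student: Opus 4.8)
The plan is to route through the open book decomposition the relative trisection already gives on the boundary. By Lemma~\ref{lem:obd}, a $(g,k;p,b)$-relative trisection of $X$ induces an open book decomposition of the closed $3$-manifold $\partial X$ with page $F:=\Sigma_{p,b}$ and some monodromy $\phi\colon F\to F$ that is the identity near $\partial F$. So it is enough to exhibit, for any open book with page $F$, the associated Heegaard splitting of genus $1-\chi(F)$, and then to compute $1-\chi(\Sigma_{p,b})=1-(2-2p-b)=2p+b-1$.

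Concretely, I would write $\partial X=M_\phi\cup_{\partial F\times S^1}(B\times D^2)$, where $B$ is the binding, $B\times D^2$ a tubular neighborhood of it, and $M_\phi=F\times[0,1]/(x,1)\sim(\phi(x),0)$ the mapping torus with projection $\pi\colon M_\phi\to S^1$, the gluing arranged so that $\partial\pi^{-1}(\theta)\subset\partial M_\phi$ is identified with $B\times\{e^{i\theta}\}\subset\partial(B\times D^2)$ (in particular $\partial F$ with $B$). Choosing the two points $e^{i0},e^{i\pi}$ cuts $S^1$ into arcs $a_+$ and $a_-$; set $H_\pm:=\pi^{-1}(a_\pm)$, which is a product $F\times I$, split each disk fiber $D^2$ along the diameter $d$ from $e^{i0}$ to $e^{i\pi}$ into half-disks $D^2_\pm$ with $\partial D^2_\pm=a_\pm\cup d$, and enlarge
\[
H'_\pm := H_\pm \cup (B\times D^2_\pm),
\]
glued along the annulus $B\times a_\pm\subset\partial H_\pm$.

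Then there are two things to check. First, $H'_\pm$ is still a handlebody of genus $1-\chi(F)=2p+b-1$: identifying $D^2_\pm$ with $a_\pm\times[0,1]$ so that $a_\pm\times\{0\}$ is the arc $a_\pm$ realizes $B\times D^2_\pm\cong(B\times a_\pm)\times[0,1]$ as an exterior collar attached to $H_\pm$ along the proper subsurface $B\times a_\pm$ of $\partial H_\pm$, hence $H'_\pm\cong H_\pm\cong F\times I$. Second, $H'_+\cup H'_-=M_\phi\cup(B\times D^2)=\partial X$ and
\[
H'_+\cap H'_- = \pi^{-1}(\{e^{i0},e^{i\pi}\}) \cup (B\times d) = F_0 \cup (B\times d) \cup F_\pi ,
\]
where $F_0,F_\pi$ are the two fibers; since $\partial F_0$ and $\partial F_\pi$ are precisely the two boundary circles of the annulus $B\times d$, this surface is the double of $F$ along $\partial F$, a closed surface of genus $2p+b-1$. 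Thus $\partial X=H'_+\cup_{H'_+\cap H'_-}H'_-$ is a genus $2p+b-1$ Heegaard splitting.

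The only step carrying real content is the first check: that capping the mapping torus with the half-disk bundles over the binding keeps the two sides handlebodies of the expected genus and produces exactly the doubled page as splitting surface. This is the standard ``open book Heegaard splitting'', and I expect nothing harder than careful bookkeeping with collars; as a sanity check, $H'_\pm$ deformation retracts onto a single closed page $\cong\Sigma_{p,b}$, so its $\pi_1$ is free of rank $2p+b-1$, and the formula $2p+b-1$ is consistent with Lemma~\ref{lem:obd} and Proposition~\ref{prop:Euler} (for instance it yields the genus $3$ Heegaard splitting of $\partial M_n$ from the $(3,3;0,4)$-relative trisection $\calT_n$).
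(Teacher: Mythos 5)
Your proof is correct and follows exactly the paper's route: invoke Lemma~\ref{lem:obd} to get the open book on $\partial X$ and then split the $S^1$ base into two arcs, so that the two half-open-books (capped off along the binding) give the genus $2p+b-1$ Heegaard splitting. The paper states this in one line; your version just spells out the collar/doubling bookkeeping that the paper leaves implicit.
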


\begin{proof}
 By Lemma~\ref{lem:obd}, there exists an open book decomposition $(B,\pi)$ on $\partial{X}$ with page $\Sigma_{p,b}$, where $B$ is a binding and $\pi:\partial{X}-B \to S^1$ is a fibration.
 Under the identification $S^1\cong[0,1]/\sim$, we obtain the genus $2p+b-1$ Heegaard splitting $\partial{X}=(\pi^{-1}([0,1/2])\cup B)\cup(\pi^{-1}([1/2,1])\cup B)$.
\end{proof}

 By Definition~\ref{def:M_n} and Lemma~\ref{prop:rt-Hspl}, we see that each $\partial{M_n}$ admits a genus $3$ Heegaard splitting.

\section{Relative trisections of an exotic pair of $4$-manifolds}\label{sec:exotic}

 Recall that $M_1$ is diffeomorphic to the Akbulut cork $W_1$.
 Thus the trisection genus of $W_1$ is $3$, that is, Theorem~\ref{mainthm:Ac} holds.
 The diagram $\calD_1$ in Figure~\ref{fig:Ac-rtd} is a $(3,3;0,4)$-relative trisection diagram of $W_1$.
\begin{figure}[!htbp]
\centering
\includegraphics[scale=1.0]{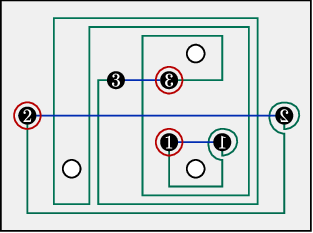}
\caption{A $(3,3;0,4)$-relative trisection diagram $\calD_1$ of the Akbulut cork $W_1$.}
\label{fig:Ac-rtd}
\end{figure}

 Let $P$ and $Q$ be the $4$-manifolds given by Figures~\ref{fig:P-Kd} and \ref{fig:Q-Kd}, respectively.
 They are exotic and related by a cork twist along $W_1$ (see subsection~9.1 in \cite{AkbYas13}).
 To prove Theorem~\ref{mainthm:exotictris}, we construct relative trisections of $P$ and $Q$.

\begin{figure}[!htbp]
	\begin{tabular}{cc}
		\begin{minipage}[t]{0.45\hsize}
		\centering
		\includegraphics[scale=1.0]{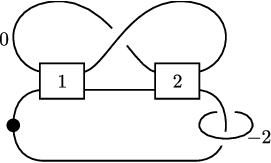}
		\caption{$P$}
		\label{fig:P-Kd}
		\end{minipage} &
		\begin{minipage}[t]{0.45\hsize}
		\centering
		\includegraphics[scale=1.0]{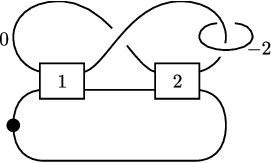}
		\caption{$Q$}
		\label{fig:Q-Kd}
		\end{minipage}
	\end{tabular}
\end{figure}

\begin{proof}[Proof of Theorem~\ref{mainthm:exotictris}]
 Let $\calD_P$ and $\calD_Q$ be the diagrams in Figures~\ref{fig:P-rtd} and \ref{fig:Q-rtd}, respectively.
 First, we prove that $\calD_P$ and $\calD_Q$ are relative trisection diagrams.
 As in the proof of Lemma~\ref{lem:rtd-prf}, we verify that $(\Sigma,\alpha,\beta)$, $(\Sigma,\beta,\gamma)$, and $(\Sigma,\gamma,\alpha)$ are diffeomorphism and handle slide equivalent to the standard diagram in Figure~\ref{fig:std-rtd}.
 The cases of $(\Sigma,\alpha,\beta)$ and $(\Sigma,\gamma,\alpha)$ are easy.
 The proofs for the case of $(\Sigma;\beta,\gamma)$ are shown in Figures~\ref{fig:P-sigma-ca} and \ref{fig:Q-sigma-ca}.
\begin{figure}[!htbp]
\centering
\includegraphics[scale=1.0]{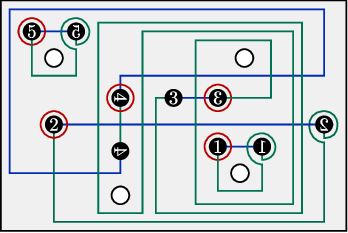}
\caption{A $(5,4;0,5)$-relative trisection diagram $\calD_P$.}
\label{fig:P-rtd}
\end{figure}
\begin{figure}[!htbp]
\centering
\includegraphics[scale=1.0]{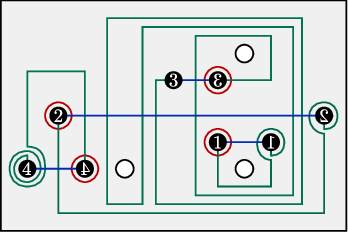}
\caption{A $(4,3;0,4)$-relative trisection diagram $\calD_Q$.}
\label{fig:Q-rtd}
\end{figure}

 Next, we show that $\calD_P$ and $\calD_Q$ are relative trisection diagrams of the $4$-manifolds $P$ and $Q$, respectively.
 Use the algorithm of \cite{KimMil20} to obtain the handlebody diagrams corresponding to these relative trisection diagrams, and perform the handle moves (see Figures~\ref{fig:P-KMalg} and \ref{fig:Q-KMalg}).
 We then see that they coincide with $P$ and $Q$ by the handle moves in Figure~\ref{fig:PQ-Kcalc}.
 For the last isotopy, see Figures~\ref{fig:M_n-k.calc2} and \ref{fig:M_n-Kcalc3} for the case $n=1$.

 We now consider lower bounds for the trisection genera of $P$ and $Q$.
 For the handlebody diagram of $Q$ in Figure~\ref{fig:Q-Kd}, using the slam-dunk move, we see that $\partial{Q}$ is obtained by Dehn surgery along the Mazur link (see the left diagram of Figure~\ref{fig:bdryQ}).
 To show that the manifold is hyperbolic, we use the techniques of Yamada \cite{Yam18}, he gave a complete list of exceptional integral surgeries along the Mazur link.
 The two $3$-manifolds given by Figure~\ref{fig:bdryQ} are diffeomorphic.
 The link of the right diagram is called the minimally twisted $4$-chain link.
 Exceptional surgeries along this link are classified by Martelli, Petronio and Roukema \cite{MarPetRou14}.
 By following the same argument as in subsection~3.3 of \cite{Yam18}, we see that our Dehn surgery is not exceptional, so $\partial{Q}$ is hyperbolic.
 Since $P$ and $Q$ are homeomorphic and $\chi(Q)=2$, the trisection genera are greater than $3$ by Theorem~\ref{mainthm:lowerbound}.
 Thus the trisection genus of $Q$ is $4$, and the trisection genus of $P$ is either $4$ or $5$.
\end{proof}

\begin{figure}[!htbp]
\centering
\includegraphics[scale=1.0]{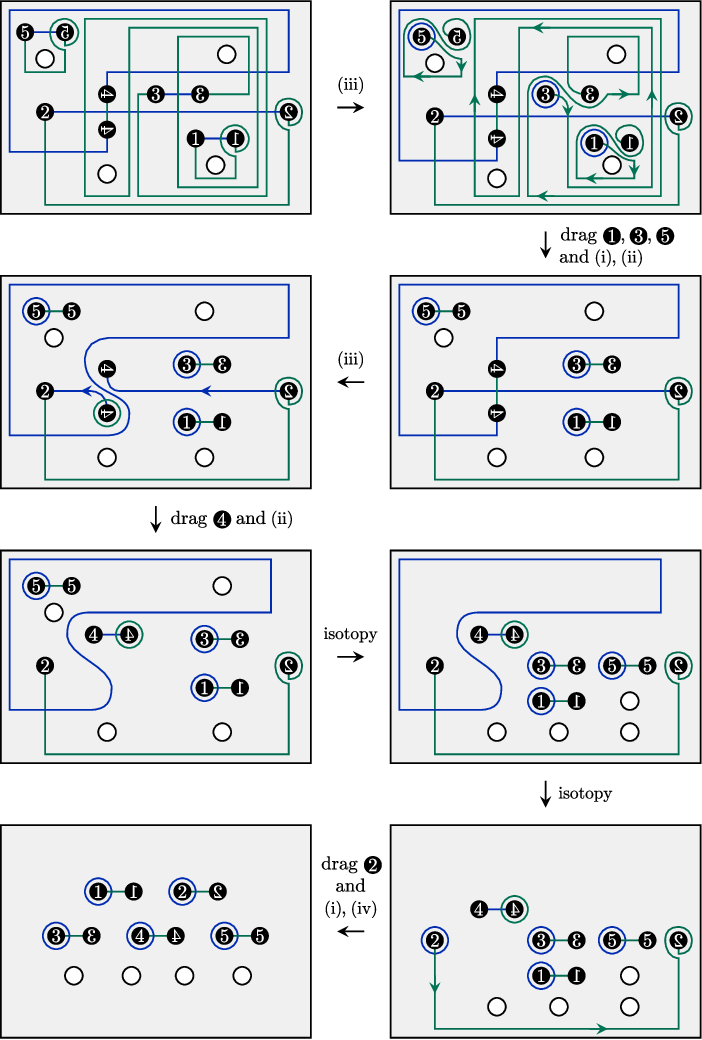}
\caption{Diffeomorphisms and handle slides proving $(\Sigma;\beta,\gamma)$ of $\calD_P$ can be made standard.}
\label{fig:P-sigma-ca}
\end{figure}
\begin{figure}[!htbp]
\centering
\includegraphics[scale=1.0]{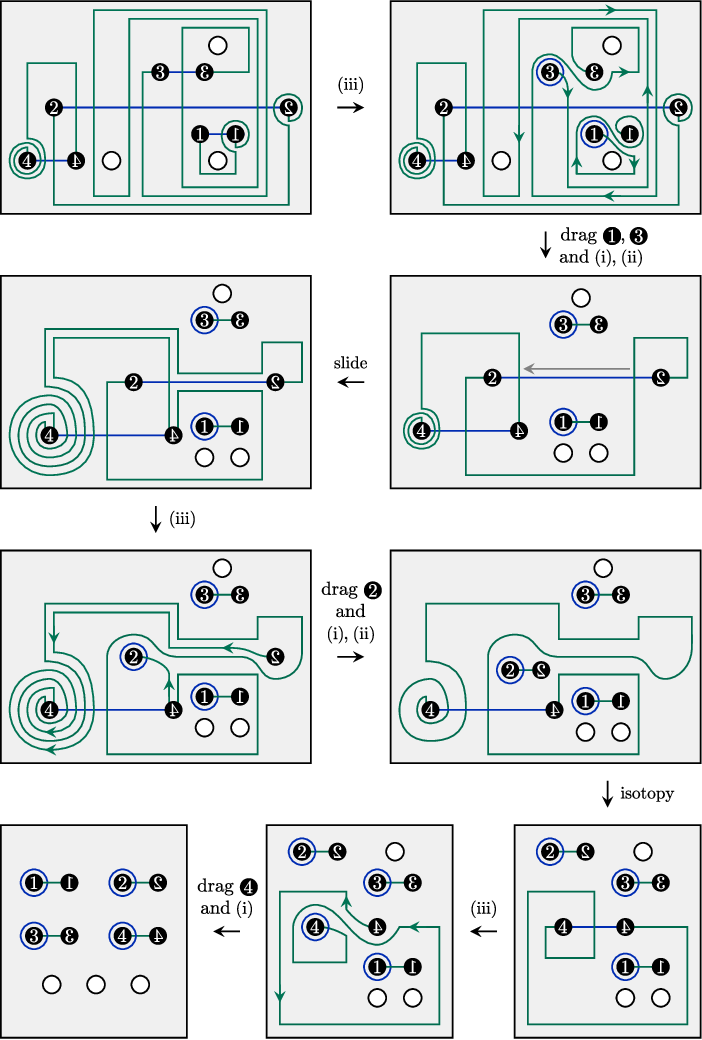}
\caption{Diffeomorphisms and handle slides proving $(\Sigma;\beta,\gamma)$ of $\calD_Q$ can be made standard.}
\label{fig:Q-sigma-ca}
\end{figure}
\begin{figure}[!tbp]
\centering
\includegraphics[scale=1.0]{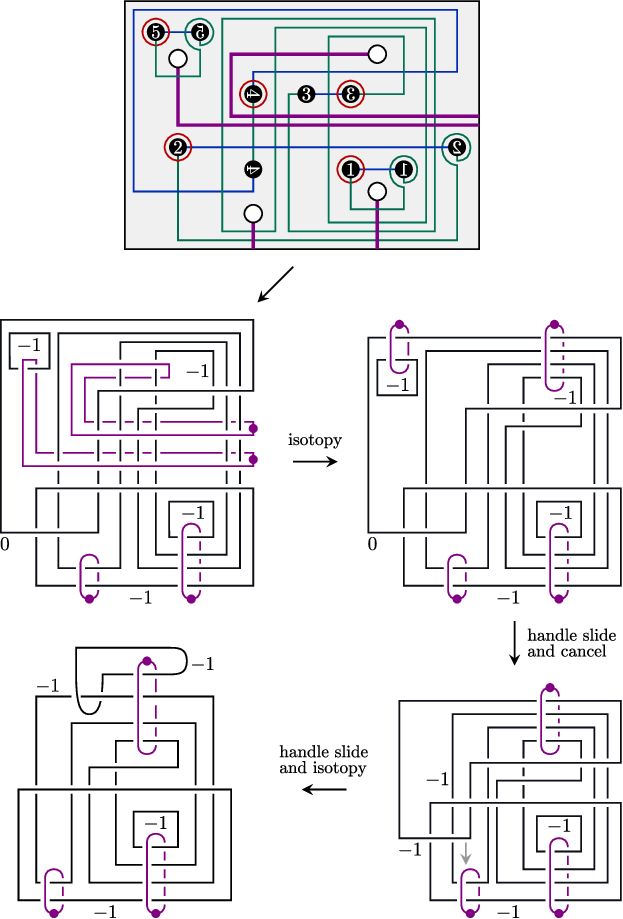}
\caption{$\calD_P$ with a cut system and handle moves of the induced $4$-manifold.}
\label{fig:P-KMalg}
\end{figure}
\begin{figure}[!htbp]
\centering
\includegraphics[scale=1.0]{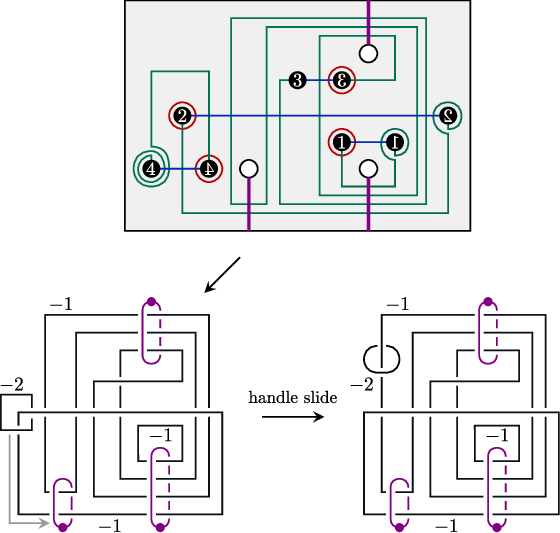}
\caption{$\calD_Q$ with a cut system and handle moves of the induced $4$-manifold.}
\label{fig:Q-KMalg}
\end{figure}
\begin{figure}[!htbp]
\centering
\includegraphics[scale=1.0]{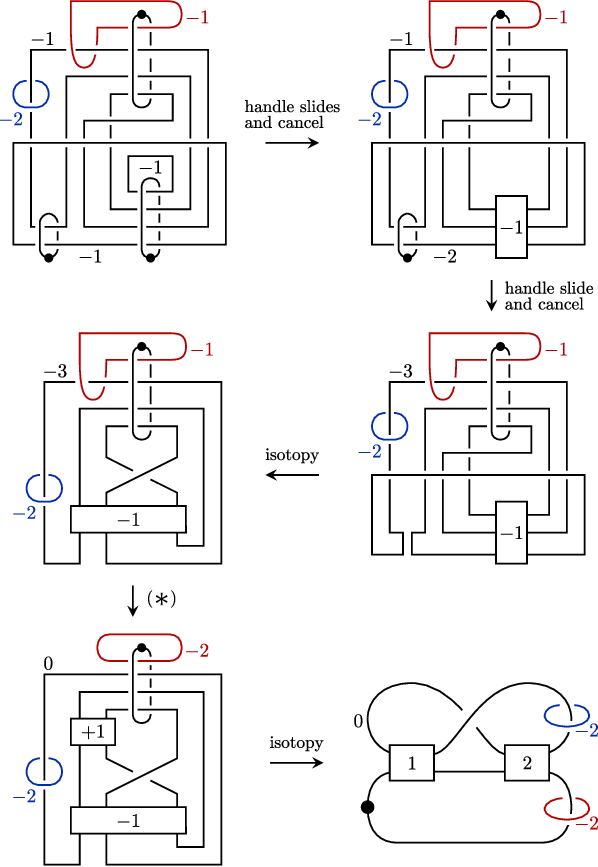}
\caption{Handle moves of the Akbulut cork with two circles.}
\label{fig:PQ-Kcalc}
\end{figure}
\begin{figure}[!htbp]
\centering
\includegraphics[scale=0.78]{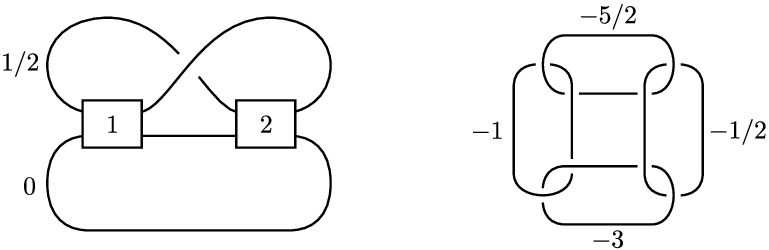}
\caption{Surgery diagrams of $\partial{Q}$.}
\label{fig:bdryQ}
\end{figure}

\begin{rem}
 The trisection genus of $P$ is greater than or equal to $4$.
 By Lemma~\ref{lem:rtd-Euler}, we see that $P$ could admit relative trisections of $(4,3;0,4)$ or $(4,2;1,1)$. 
 However, we have not been able to construct such relative trisections.
 If $P$ cannot admit genus $4$ relative trisections, the trisection genus for $4$-manifolds with boundary is not homeomorphism invariant.
\end{rem}

\subsection*{Acknowledgements}%
 This paper is partially based on the author's master thesis.
 The author would like to thank his adviser Kouichi Yasui for his encouragement and helpful discussions.
 The author was partially supported by JST SPRING, Grant Number JPMJSP2138.
%



\begin{thebibliography}{99}

\bibitem{AimPL}
AimPL: Trisections and low-dimensional topology, available at
  http://aimpl.org/trisections.

\bibitem{Akb91_1}
S.~Akbulut, \emph{A fake compact contractible {$4$}-manifold}, J. Differential
  Geom. \textbf{33} (1991), no.~2, 335--356.

\bibitem{AkbKir79}
S.~Akbulut and R.~Kirby, \emph{Mazur manifolds}, Michigan Math. J. \textbf{26}
  (1979), no.~3, 259--284.

\bibitem{AkbMat97}
S.~Akbulut and R.~Matveyev, \emph{Exotic structures and adjunction inequality},
  Turkish J. Math. \textbf{21} (1997), no.~1, 47--53.

\bibitem{AkbYas13}
S.~Akbulut and K.~Yasui, \emph{Cork twisting exotic {S}tein 4-manifolds}, J.
  Differential Geom. \textbf{93} (2013), no.~1, 1--36.

\bibitem{Ari08}
M.~F. Ar{\i}kan, \emph{Planar contact structures with binding number three},
  Proceedings of {G}\"{o}kova {G}eometry-{T}opology {C}onference 2007,
  G\"{o}kova Geometry/Topology Conference (GGT), G\"{o}kova, 2008, pp.~90--124.

\bibitem{BaySae17a}
R.~I. Baykur and O.~Saeki, \emph{Simplifying indefinite fibrations on
  4-manifolds}, arXiv preprint arXiv:1705.11169 (2017).

\bibitem{Boy86}
S.~Boyer, \emph{Simply-connected {$4$}-manifolds with a given boundary}, Trans.
  Amer. Math. Soc. \textbf{298} (1986), no.~1, 331--357.

\bibitem{Cas16}
N.~A. Castro, \emph{Relative trisections of smooth 4-manifolds with boundary},
  Ph.D. thesis, University of Georgia, 2016.

\bibitem{CasGayPin18_1}
N.~A. Castro, D.~T. Gay, and J.~Pinz\'{o}n-Caicedo, \emph{Diagrams for relative
  trisections}, Pacific J. Math. \textbf{294} (2018), no.~2, 275--305.

\bibitem{CasGayPin18_2}
N.~A. Castro, D.~T. Gay, and J.~Pinz\'{o}n-Caicedo, \emph{Trisections of 4-manifolds with boundary}, Proc. Natl. Acad.
  Sci. USA \textbf{115} (2018), no.~43, 10861--10868.

\bibitem{CasIslMilTom21}
N.~A. Castro, G.~Islambouli, M.~Miller, and M.~Tomova, \emph{The relative
  {L}-invariant of a compact 4-manifold}, Pacific J. Math. \textbf{315} (2021),
  no.~2, 305--346.

\bibitem{CasOzb19}
N.~A. Castro and B.~Ozbagci, \emph{Trisections of 4-manifolds via {L}efschetz
  fibrations}, Math. Res. Lett. \textbf{26} (2019), no.~2, 383--420.

\bibitem{CurFreHsiSto96}
C.~L. Curtis, M.~H. Freedman, W.~C. Hsiang, and R.~Stong, \emph{A decomposition
  theorem for {$h$}-cobordant smooth simply-connected compact {$4$}-manifolds},
  Invent. Math. \textbf{123} (1996), no.~2, 343--348.

\bibitem{DaiHedMal20a}
I.~Dai, M.~Hedden, and A.~Mallick, \emph{Corks, involutions, and heegaard floer
  homology}, arXiv preprint arXiv:2002.02326 (2020).

\bibitem{EtnOzb08}
J.~B. Etnyre and B.~Ozbagci, \emph{Invariants of contact structures from open
  books}, Trans. Amer. Math. Soc. \textbf{360} (2008), no.~6, 3133--3151.

\bibitem{FoxMil66}
R.~H. Fox and J.~W. Milnor, \emph{Singularities of {$2$}-spheres in {$4$}-space
  and cobordism of knots}, Osaka Math. J. \textbf{3} (1966), 257--267.

\bibitem{GayKir16}
D.~Gay and R.~Kirby, \emph{Trisecting 4-manifolds}, Geom. Topol. \textbf{20}
  (2016), no.~6, 3097--3132.

\bibitem{Gom98}
R.~E. Gompf, \emph{Handlebody construction of {S}tein surfaces}, Ann. of Math.
  (2) \textbf{148} (1998), no.~2, 619--693.

\bibitem{Ich12}
K.~Ichihara, \emph{Exceptional surgeries on components of 2-bridge links},
  Arch. Math. (Basel) \textbf{99} (2012), no.~1, 71--79.

\bibitem{IchJonMas19a}
K.~Ichihara, I.~D. Jong, and H.~Masai, \emph{Complete exceptional surgeries on
  two-bridge links}, arXiv preprint arXiv:1909.11319 (2019).

\bibitem{KarObaUki17}
\c{C}. Karakurt, T.~Oba, and T.~Ukida, \emph{Planar {L}efschetz fibrations and
  {S}tein structures with distinct {O}zsv\'{a}th-{S}zab\'{o} invariants on
  corks}, Topology Appl. \textbf{221} (2017), 630--637.

\bibitem{Kaw96b}
A.~Kawauchi, \emph{A survey of knot theory}, Birkh\"{a}user Verlag, Basel,
  1996, Translated and revised from the 1990 Japanese original by the author.

\bibitem{KimMil20}
S.~Kim and M.~Miller, \emph{Trisections of surface complements and the {P}rice
  twist}, Algebr. Geom. Topol. \textbf{20} (2020), no.~1, 343--373.

\bibitem{LamMei20}
P.~Lambert-Cole and J.~Meier, \emph{Bridge trisections in rational surfaces},
  Journal of Topology and Analysis (2020), 1--54.

\bibitem{MarPetRou14}
B.~Martelli, C.~Petronio, and F.~Roukema, \emph{Exceptional {D}ehn surgery on
  the minimally twisted five-chain link}, Comm. Anal. Geom. \textbf{22} (2014),
  no.~4, 689--735.

\bibitem{Mat96}
R.~Matveyev, \emph{A decomposition of smooth simply-connected {$h$}-cobordant
  {$4$}-manifolds}, J. Differential Geom. \textbf{44} (1996), no.~3, 571--582.

\bibitem{MeiZup17_1}
J.~Meier and A.~Zupan, \emph{Genus-two trisections are standard}, Geom. Topol.
  \textbf{21} (2017), no.~3, 1583--1630.

\bibitem{MeiZup18}
J.~Meier and A.~Zupan, \emph{Bridge trisections of knotted surfaces in 4-manifolds}, Proc.
  Natl. Acad. Sci. USA \textbf{115} (2018), no.~43, 10880--10886.

\bibitem{Men84}
W.~Menasco, \emph{Closed incompressible surfaces in alternating knot and link
  complements}, Topology \textbf{23} (1984), no.~1, 37--44.

\bibitem{Mil62}
J.~Milnor, \emph{A unique decomposition theorem for {$3$}-manifolds}, Amer. J.
  Math. \textbf{84} (1962), 1--7.

\bibitem{Oba15}
T.~Oba, \emph{A note on {M}azur type {S}tein fillings of planar contact
  manifolds}, Topology Appl. \textbf{193} (2015), 302--308.

\bibitem{Rol76b}
D.~Rolfsen, \emph{Knots and links}, Mathematics Lecture Series, No. 7, Publish
  or Perish, Inc., Berkeley, Calif., 1976.

\bibitem{Sav02b}
N.~Saveliev, \emph{Invariants for homology {$3$}-spheres}, Encyclopaedia of
  Mathematical Sciences, vol. 140, Springer-Verlag, Berlin, 2002,
  Low-Dimensional Topology, I.

\bibitem{Ter59}
H.~Terasaka, \emph{On null-equivalent knots}, Osaka Math. J. \textbf{11}
  (1959), 95--113.

\bibitem{Thu02b}
W.~P. Thurston, \emph{The geometry and topology of three-manifolds, electronic
  version 1.1, 2002}, http://www.msri.org/publications/books/gt3m.

\bibitem{Wal68}
F.~Waldhausen, \emph{Heegaard-{Z}erlegungen der {$3$}-{S}ph\"{a}re}, Topology
  \textbf{7} (1968), 195--203.

\bibitem{Yam18}
Y.~Yamada, \emph{Exceptional {D}ehn surgeries along the {M}azur link}, J.
  G\"{o}kova Geom. Topol. GGT \textbf{12} (2018), 40--70.

\end{thebibliography}
\end{document}